\def\Q{\mathbb{Q}}
\def\R{\mathbb{R}}
\def\N{\mathbb{N}}
\def\L{\mathscr{L}}
\def\varL{\mathcal{L}}
\def\varalpha{\mathfrak{a}}
\def\varbeta{\mathfrak{b}}
\newtheorem{Theorem}{Theorem}
\newtheorem{Lemma}[Theorem]{Lemma}
\title[The Least Prime Number in a Beatty Sequence]{The Least Prime Number in a Beatty Sequence}
\subjclass[2010]{11N13, 11B83, 11K60}
\keywords{Prime number, Beatty sequence, Linnik's theorem}
\author{Jörn Steuding}
\address{Jörn Steuding\\%
	Department of Mathematics\\%
	University of Würzburg\\%
	Campus Hubland Nord\\%
	Emil-Fischer-Straße 40\\%
	D-97074 Würzburg\\%
	Germany}
\email{steuding@mathematik.uni-wuerzburg.de}
\author{Marc Technau}
\address{Marc Technau\\%
	Department of Mathematics\\%
	University of Würzburg\\%
	Campus Hubland Nord\\%
	Emil-Fischer-Straße 40\\%
	D-97074 Würzburg\\%
	Germany}
\email{marc.technau@mathematik.uni-wuerzburg.de}
\date{November 2015, revised May 2016}
\begin{document}

\begin{abstract}
We prove an upper bound for the least prime in an irrational Beatty sequence. This result may be compared with Linnik's theorem on the least prime in an arithmetic progression.
\end{abstract}

\maketitle

\section{Introduction and Statement of the Main Results}

In 1944, Yuri~Linnik \cite{linnik1,linnik2} showed that for every sufficiently large $q$ and coprime $a$, there exists a constant $\ell$ such that the least prime $p$ in the prime residue class $a\bmod\,q$ satisfies $p\leq cq^\ell$, where $c$ is another absolute constant; the best bound so far for the appearing constant in the exponent is $\ell\leq 5$ due to Triantafyllos~Xylouris \cite{xylouris}. This question dates back to  Savardaman Chowla \cite{chowla} who conjectured in 1934 that one may even take $\ell=1+\epsilon$. Two years later P\'al Tur\'an \cite{turan} proved that this is true for almost all $q$ and that it holds in general under the assumption of the generalized Riemann hypothesis. In this note we shall investigate the corresponding question about the least prime in a Beatty sequence.
\par

Given a positive real number $\alpha$ and a non-negative real $\beta$, the associated (generalized) Beatty sequence is defined by
$$
{\mathcal B}(\alpha,\beta)=\{\lfloor n\alpha+\beta\rfloor\,:\,n\in\N\},
$$
where $\lfloor x\rfloor$ denotes the largest integer less than or equal to $x$. If $\alpha$ is rational, then ${\mathcal B}(\alpha,\beta)$ is a union of residue classes and, if at least one of them is a prime residue class, we may apply Linnik's theorem to bound the least prime (see the concluding remarks, \S~\ref{concluding}). Otherwise, if $\alpha$ is irrational, ${\mathcal B}(\alpha,\beta)$ does not contain an entire residue class. It follows from a classical exponential sum estimate due to Ivan~M.~Vinogradov \cite{vinogradov1} that there exist infinitely many prime numbers in such a Beatty sequence (details in \S~\ref{irrational}), hence, in particular there exists a least prime number.
However, the problem of estimating the size of the least prime is clearly different as compared to the rational case. In fact, for an integer $m\geq 2$ consider $\alpha_m=4+\sqrt{2}/m$. Since $\lfloor n\alpha_m\rfloor=4n$ for $n=1,2,\ldots,\lfloor m/\sqrt{2}\rfloor=:M$, there is no prime amongst the first $M$ elements of ${\mathcal B}(\alpha_m,0)$. It turns out that the diophantine character of $\alpha$ has an impact on a non-trivial bound for the least prime in the associated Beatty sequence:

\begin{Theorem}\label{uno}
For every positive $\epsilon$ there exists a computable positive integer $\ell$ such that for every irrational $\alpha>1$ the least prime $p$ in the Beatty sequence ${\mathcal B}(\alpha,\beta)$ satisfies the inequality
\begin{equation}\label{estimate1}
p \leq \varL^{35-16\epsilon} \alpha^{2(1-\epsilon)} B p_{m+\ell}^{1+\epsilon},
\end{equation}
where
$B=\max\{1,\beta\}$, $\varL=\log(2\alpha B)$,
$p_n$ denotes the numerator of the $n$-th convergent to the regular continued fraction expansion of $\alpha=[a_0,a_1,\ldots]$ and $m$ is the unique integer such that
\begin{equation}\label{m}
p_m \leq \varL^{16} \alpha^2 < p_{m+1}.
\end{equation}
\end{Theorem}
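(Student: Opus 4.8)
The plan is to count the primes $p\le x$ lying in $\mathcal B(\alpha,\beta)$ for a suitable $x$ of the size asserted in \eqref{estimate1} and to show this count is positive, which forces the least prime to be $\le x$. The first step is to convert Beatty membership into an analytic condition: an integer $k$ lies in $\mathcal B(\alpha,\beta)$ exactly when the interval $[(k-\beta)/\alpha,(k+1-\beta)/\alpha)$, of length $1/\alpha<1$, contains an integer, so the indicator function of $\mathcal B(\alpha,\beta)$ at an integer $k$ equals $\tfrac1\alpha+\psi(\tfrac{k-\beta}{\alpha})-\psi(\tfrac{k+1-\beta}{\alpha})$, where $\psi(t)=t-\lfloor t\rfloor-\tfrac12$. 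Summing over primes $p\le x$ yields $N(x):=\#\{p\le x:\ p\in\mathcal B(\alpha,\beta)\}=\tfrac{\pi(x)}{\alpha}+\sum_{p\le x}\bigl(\psi(\tfrac{p-\beta}{\alpha})-\psi(\tfrac{p+1-\beta}{\alpha})\bigr)$, so it suffices to exhibit an $x$ of the claimed order for which the main term $\pi(x)/\alpha$ provably dominates the sawtooth error.

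For the main term I would use an effective prime number theorem, $\pi(x)\gg x/\log x$ once $x$ exceeds an absolute constant; this is harmless because the right-hand side of \eqref{estimate1} exceeds a fixed power of $\alpha$ (one has $p_{m+\ell}\ge p_{m+1}>\varL^{16}\alpha^2$ by \eqref{m}). For the error I would expand each $\psi$ in its truncated Fourier series in the style of Vinogradov and Vaaler: for a parameter $H$, $\psi(t)=-\sum_{1\le|h|\le H}\tfrac{e(ht)}{2\pi i h}+O(\Delta_H(t))$ (writing $e(u)=e^{2\pi i u}$), with $\Delta_H$ a non-negative trigonometric polynomial of degree $\le H$ and mean value $O(1/H)$. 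Since the coefficient of $e(hp/\alpha)$ coming from $\psi(\tfrac{p-\beta}{\alpha})-\psi(\tfrac{p+1-\beta}{\alpha})$ has modulus $\tfrac{|1-e(h/\alpha)|}{2\pi|h|}\le\min\bigl(\tfrac1\alpha,\tfrac1{|h|}\bigr)$, this bounds the error by $\ll\tfrac{\pi(x)}{H}+\tfrac1\alpha\sum_{1\le h\le H}\bigl|\sum_{p\le x}e(hp/\alpha)\bigr|$; taking $H$ a suitable power of $\varL$ times $\alpha$ renders the first term negligible against $\pi(x)/\alpha$, and everything reduces to the prime exponential sums $S(h):=\sum_{p\le x}e(hp/\alpha)$ for $1\le h\le H$.

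The heart of the argument is estimating $S(h)$ — equivalently, via partial summation, $\sum_{n\le x}\Lambda(n)e(hn/\alpha)$ up to a negligible prime-power contribution — by Vinogradov's method, and this is where the diophantine character of $\alpha$ enters. Vinogradov's estimate is effective precisely when $h/\alpha$ is well approximated by a rational with denominator in a suitable range, and such approximations are furnished by the regular continued fraction of $\alpha$, whose convergents' numerators $p_k$ are the best-approximation denominators for $1/\alpha$: for each $k$, the number $h/\alpha$ lies within $O\bigl(h/(p_k p_{k+1})\bigr)$ of a rational with denominator $p_k/\gcd(h,p_k)$. The index $m$ of \eqref{m} is chosen so that $p_m$ already has size $\asymp\varL^{16}\alpha^2$, comfortably above the threshold $\asymp\alpha^2(\log x)^2$ below which the approximation would be too coarse to beat the main term; passing out to $p_{m+\ell}$, with $\ell=\ell(\epsilon)$ a large but fixed integer, then secures a usable modulus irrespective of the behaviour of the partial quotients and is exactly what produces the factor $p_{m+\ell}^{1+\epsilon}$ in \eqref{estimate1}. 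The possible collapse of the effective modulus when $h$ shares a large factor with $p_k$ is absorbed by splitting $\sum_{h\le H}$ according to $d=\gcd(h,p_k)$ and summing the $\tau(p_k)\ll_\epsilon p_k^{\epsilon}$ divisor contributions, and this divisor loss is the source of the $\epsilon$'s in the exponents of \eqref{estimate1}.

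Assembling the pieces, $N(x)\ge\tfrac{\pi(x)}{\alpha}-O\bigl(\tfrac{\pi(x)}{H}\bigr)-O\bigl(\tfrac1\alpha\sum_{h\le H}|S(h)|\bigr)>0$ as soon as $x$ has the asserted size, and a careful accounting of the constants in the effective prime number theorem, the Vaaler truncation, the Vinogradov estimate (with the refinements exploiting the continued-fraction structure of $h/\alpha$), the choice of $H$ and the divisor sums yields the precise exponents $35-16\epsilon$, $2(1-\epsilon)$ and $1+\epsilon$. I expect the main obstacle to lie precisely here: obtaining, uniformly for every $h$ up to $H\asymp\alpha$, a bound for $S(h)$ genuinely smaller than $\pi(x)/\alpha$ for an $x$ only of order $\alpha^{2}$ times $p_{m+\ell}$. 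One must simultaneously prevent $p_{m+1}$ from being too small (handled by the choice \eqref{m} of $m$), prevent an enormous partial quotient from leaving no convergent denominator in the admissible window (handled by going out to $p_{m+\ell}$, which automatically enlarges $x$ to compensate), and control the $h$ that share large factors with the chosen modulus (handled by the divisor-function splitting), all while keeping the numerology tight enough to reach the stated exponents. The concluding remarks — the rational case via Linnik's theorem and the sharpness examples such as $\alpha_m=4+\sqrt2/m$ — should then be routine.
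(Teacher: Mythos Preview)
Your overall strategy---convert Beatty membership to a condition on $\{p/\alpha\}$, Fourier-expand the indicator, reduce to prime exponential sums, and feed in continued-fraction approximations to $1/\alpha$---is correct and matches the paper in spirit. But the execution differs in one structural point that makes your route harder than necessary.

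The paper does not estimate each $S(h)=\sum_{p\le x}e(hp/\alpha)$ individually with its own rational approximation to $h/\alpha$ and then sum over $h$ with a $\gcd(h,p_k)$-splitting. Instead it invokes Vaughan's theorem (Mathematika \textbf{24} (1977)) as a black box: given a \emph{single} approximation $|1/\alpha-a/q|<1/q^2$, Vaughan bounds $\sum_{n\le N}\Lambda(n)\bigl(\chi_\delta(n/\alpha+\varbeta)-2\delta\bigr)$ directly in terms of $N$, $q$, and $\delta=1/(2\alpha)$; the Fourier truncation and the collective sum over harmonics are packaged inside. The user only has to choose $q$. The paper takes $q=p_{m+\ell}$, writes $N=\varL^{35}\alpha^2 B q\,\eta^\epsilon$ with $\eta=q/(\varL^{16}\alpha^2)\ge p_{m+\ell}/p_{m+1}\ge F_\ell$ (Fibonacci), and checks that each of the four terms in Vaughan's bound is beaten by the main term once $\ell$---hence $\eta$---is large enough.

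Two consequences. First, the gcd complication you anticipate (``collapse of the effective modulus when $h$ shares a large factor with $p_k$'') simply does not arise: the same $q$ serves for every harmonic, because Vaughan's argument estimates sums of the shape $\sum_x\min(Y,\Vert x/\alpha\Vert^{-1})$ over \emph{combined} variables rather than treating each $h$ in isolation. Second, your diagnosis of the $\epsilon$-loss is off: it does not come from a divisor sum over $\gcd(h,p_k)$, but from the bound $d(x)\ll_\epsilon x^\epsilon$ applied \emph{inside} Vaughan's identity, to control the coefficients after merging variables in the type-I sums. Your per-$h$ route is not hopeless---one can in principle estimate each $S(h)$ and sum---but it is messier, and recovering the exact exponents $35-16\epsilon$, $2(1-\epsilon)$, $1+\epsilon$ that way amounts to re-proving Vaughan's theorem along the way. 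The role of $\ell$ is also slightly simpler than you describe: it is not about dodging a bad partial quotient in a window, but just about forcing $\eta\ge F_\ell$ to exceed a fixed $\eta_0(\epsilon)$, via the trivial Fibonacci lower bound $p_{m+\ell}\ge F_\ell\, p_{m+1}$.
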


\noindent Since the sequence of numerators $p_n$ is recursively given by 
\begin{equation}\label{recur}
p_{-1}=1,\ p_0=a_0=\lfloor \alpha\rfloor,\quad \mbox{and}\quad p_{n+1}=a_{n+1}p_n+p_{n-1}
\end{equation}
with partial quotients $a_n\in\N$ for $n\in\N$, the $p_n$'s are strictly increasing integers and the integer $m$ satisfying~\eqref{m} is uniquely determined. In view of the explicit exponents and absolute constants $m$ and $\ell$, Theorem~\ref{uno} may be regarded as the analogue of Linnik's theorem on the least prime in an arithmetic progression. A careful analysis of the reasoning allows to assign the quantity $k$ from Theorem~\ref{uno} an explicit (possibly not optimal) value:

\begin{Theorem}\label{two}
On the hypothesis of Theorem~\ref{uno}, and assuming $\epsilon<\frac{44}{2025}$, the least prime $p$ in the Beatty sequence ${\mathcal B}(\alpha,\beta)$ satisfies the inequality~\eqref{estimate1}, where for $\ell$ one may take the least positive integer satisfying
\begin{equation}\label{ell:ineq}
	\ell \geq 3 + 9\epsilon^{-1} \left(
		41 + \log\left(1+\epsilon^{-1}\right)
		+ \log\left(3711+2\cdot17^{-3}M_{\frac{5}{4}\epsilon}\right) 
	\right),
\end{equation}
where $M_\epsilon$ is a constant such that~\eqref{eq:DivisorBound:General} holds.
\end{Theorem}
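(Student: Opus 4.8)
The strategy is to run the proof of Theorem~\ref{uno} a second time, now carrying every numerical constant along explicitly; Theorem~\ref{two} is precisely the bookkeeping version of that argument, and the only genuinely new work is to organise the free parameters so that the resulting constraints collapse to the clean inequality~\eqref{ell:ineq}. Set $x := \varL^{35-16\epsilon}\alpha^{2(1-\epsilon)} B\, p_{m+\ell}^{1+\epsilon}$, the right-hand side of~\eqref{estimate1}; it suffices to exhibit a prime $p\le x$ belonging to ${\mathcal B}(\alpha,\beta)$. Start from the identity $\mathbf 1[n\in{\mathcal B}(\alpha,\beta)] = \lfloor(n+1-\beta)/\alpha\rfloor - \lfloor(n-\beta)/\alpha\rfloor$, valid for all but at most one integer $n$ when $\alpha$ is irrational, write $\lfloor t\rfloor = t-\tfrac12-\psi(t)$, and replace the sawtooth $\psi$ by a Vaaler trigonometric polynomial truncated at a height $H$. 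This expresses the number of primes $p\le x$ in the Beatty sequence as $\alpha^{-1}\pi(x)$ minus a tail contribution of size $\ll\pi(x)/H$ and minus a weighted sum $\sum_{1\le h\le H} w_h\,\bigl|\sum_{p\le x} e(hp/\alpha)\bigr|$ with $w_h\ll\min\{1,\|h/\alpha\|\}/h$. The plan is to fix $H$ of order $\alpha$ (times a bounded power of $\varL$), to supply each $h$ with a rational approximation of $h/\alpha$, and to force the total error to be strictly smaller than $\alpha^{-1}\pi(x)$, where the main term is bounded below by an explicit Chebyshev estimate of the shape $\pi(x)>x/\log x$; the value of $\ell$ then drops out of that inequality.

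Three ingredients have to be made fully explicit. First, the Vaaler approximation, including the constants in its coefficient bounds and in its tail term. Second, an explicit Vinogradov--Vaughan estimate: whenever $|h/\alpha-a/q|\le q^{-2}$ with $(a,q)=1$, the sum $\bigl|\sum_{p\le x}e(hp/\alpha)\bigr|$ is at most an explicit constant times $\bigl(xq^{-1/2}+x^{3/4}+x^{1/2}q^{1/2}\bigr)(\log x)^{c}$, the constant being governed by the divisor--moment bound~\eqref{eq:DivisorBound:General}; this is why $M_{\frac54\epsilon}$ appears, the bound being invoked with the slightly enlarged exponent $\tfrac54\epsilon$ so that a proportion $\tfrac14\epsilon$ of the budget stays free for the remaining losses. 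For the error to beat the main term uniformly in $\alpha$ one needs this middle exponent $\tfrac34$ (the textbook value $\tfrac45$ is too weak when $\alpha$ is badly approximable and $x$ is forced to be of order $\alpha^4$), so the exponential-sum estimate must be set up with care. Third, an explicit ``navigation'' lemma for the continued fraction of $\alpha$: the numerators $p_n$ are the denominators of the best approximations to $1/\alpha$, so $h/\alpha$ is approximated by fractions with denominators built from the $p_n$, but reducing such a fraction may shrink its denominator by up to a factor $h\le H$, and a large partial quotient $a_{n+1}$ may leave a wide gap between consecutive $p_n$. One repairs this by interposing the intermediate fractions, which still supply approximations of Dirichlet quality $q^{-2}$ and which fill the relevant range with bounded multiplicative gaps; tracking how far past the index $m$ of~\eqref{m} one must go before the accessible denominators cover a window in which all three terms of the exponential-sum bound are dominated — while absorbing the loss of the factor $h$ — is what produces the term $9\epsilon^{-1}(\cdots)$ of~\eqref{ell:ineq}, the factor $\epsilon^{-1}$ entering because $p_{m+\ell}$ occurs with exponent $1+\epsilon$ while the $p_n$ grow only geometrically (by~\eqref{recur}, $p_{n+2}>2p_n$), so that converting a multiplicative requirement into extra continued-fraction steps costs a factor $\epsilon^{-1}$ times a logarithm of the parameters.

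The last step is the optimisation and bookkeeping. Substituting $x$, using $p_{m+\ell}\ge p_{m+1}>\varL^{16}\alpha^2$, and demanding that the tail term and the exponential-sum term together be $<\alpha^{-1}\pi(x)$ reduces, after the choice of $H$, to a single lower bound on $\ell$; collecting the accumulated losses — $41$ from the absolute constants and the Vaaler/Vaughan factors, $\log(1+\epsilon^{-1})$ from the truncation height, and $\log(3711+2\cdot17^{-3}M_{\frac54\epsilon})$ from the divisor bound — yields~\eqref{ell:ineq}. The hypothesis $\epsilon<\tfrac{44}{2025}$ is exactly what keeps the various exponents in the admissible range, in particular a quantity of the form $\tfrac14-\Theta(\epsilon)$ controlling the width of the usable window for $q$ remains positive, so that the optimisation is legitimate.

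I expect the main obstacle to be the navigation lemma together with its interaction with the explicit exponential-sum bound: passing from ``$\alpha$ has a good rational approximation'' to ``for every $h\le H$, $h/\alpha$ has a good rational approximation with denominator in a prescribed window'', uniformly in $h$ and with explicit control of the denominators in terms of the $p_n$, is simultaneously what pins down $\ell$ and what feeds the constant coming from the divisor bound. The remainder, though lengthy, is careful but routine estimation of explicit constants.
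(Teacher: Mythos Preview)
Your high-level picture --- replace the Beatty indicator by a trigonometric polynomial, bound the resulting prime exponential sums explicitly, and convert the resulting inequality into a lower bound on $\ell$ via the Fibonacci growth~\eqref{eq:eta:ell:ineq} --- is the right one, and it is essentially what the paper does. But the execution you sketch diverges from the paper at a key structural point and, as a result, acquires a spurious obstacle.

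The paper does \emph{not} approximate $h/\alpha$ separately for each Fourier frequency $h$. Instead it fixes a \emph{single} denominator $q=p_{m+\ell}$ approximating $\varalpha=1/\alpha$ and feeds this into an explicit version of Vaughan's theorem (Theorem~\ref{thm:Vaughan:2:Expl} in the paper), which already packages the Fourier expansion of $\chi_\delta$ and the sum over frequencies $h$ internally (see Lemma~\ref{lem:Vaughan:DyadicSplitting} and \S\ref{subsec:Vaughan:2:Proof}). Inside that proof the extra variable $h$ is simply \emph{combined} with the other summation variables before the standard exponential-sum lemmas are applied, so one never needs a rational approximation to $h/\alpha$ at all. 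Your ``navigation lemma'' --- controlling denominators of reduced approximations to $h/\alpha$ uniformly in $h$ via intermediate fractions --- is therefore unnecessary, and the difficulty you flag as the main obstacle does not arise in the paper's route.

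This matters for the theorem as stated. The specific constants in~\eqref{ell:ineq} --- the $3711$, the $2\cdot 17^{-3}M_{\frac54\epsilon}$, the $41$, the $9\epsilon^{-1}$, the additive $3$ --- are read off directly from the explicit coefficients in Theorem~\ref{thm:Vaughan:2:Expl} after the substitution $\varalpha=1/\alpha$, $\delta=1/(2\alpha)$ and the ansatz $N=\varL^{35}\alpha^2Bq\eta^\epsilon$, $q=\varL^{16}\alpha^2\eta$ of \S\ref{irrational}: one groups the four terms of~\eqref{eq:Vaughan:2:Estimate} into a single expression $10^3\L^8\varL^{-8}\eta^{-\epsilon/2}(3711+2\cdot17^{-3}M_{\frac54\epsilon})$, bounds $\L^8\varL^{-8}\eta^{-\epsilon/4}$ by $65^8(1+\epsilon^{-1})^8$, takes a logarithm, and uses $\eta\ge G^{\ell-1}/\sqrt5$. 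Your alternative pipeline (Vaaler coefficients, per-frequency Vinogradov bounds, navigation through the continued fraction) would produce different absolute constants at every stage and could not land on~\eqref{ell:ineq} as written; at best it would give \emph{some} explicit $\ell$, not this one. Your remark that the $N^{4/5}$ term is ``too weak'' is also off: in the paper's parametrisation that term carries a negative $\eta$-exponent (this is exactly where $\epsilon<\tfrac{44}{2025}$ is used) and causes no trouble.
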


\section{Proof of Theorem~\ref{uno}}\label{irrational}

In the process of proving the ternary Goldbach conjecture for sufficiently large odd integers Vinogradov \cite{vinogradov1} obtained the estimate
$$
\sum_{p\leq N}\exp(2\pi im{\varalpha}p)\ll_\epsilon N^{1+\epsilon}\left({\frac{1}{N^{1/2}}}+{\frac{q}{N}}+{\frac{m}{q}}+{\frac{m^4}{q^2}}\right)^{1/2},
$$
where $m,q,N$ are positive integers, the exponential sum is taken over all prime numbers $p$ less than or equal to $N$, and $q$ is related to ${\varalpha}$ by the existence of an integer $a$ such that 
\begin{equation}\label{dirichlet}
\left\vert{\varalpha}-{a\over q}\right\vert<{1\over q^2}.
\end{equation}
For irrational ${\varalpha}$ Vinogradov's bound is $o(\pi(N))$, where $\pi(N)$ denotes the number of primes $p\leq N$. Letting ${\varalpha}={1\over \alpha}$, this implies that the sequence of numbers $\alpha p$, where $p$ runs through the prime numbers in ascending order, is uniformly distributed modulo one, answering a question of the young Paul Erd\H{o}s (cf. Tur\'an \cite{turan}, p. 227). This means that the proportion of fractional parts $\{\alpha p\}:=\alpha p-\lfloor \alpha p\rfloor$ which fall in an interval $[a,b)\subset [0,1)$ is equal to $b-a$ (the length of the interval) as follows from a well-known criterion due to Hermann Weyl \cite{weyl} stating that a sequence of real numbers $x_n$ is uniformly distributed modulo one if and only if, for every integer $h\neq 0$,
$$
\lim_{N\to\infty}{1\over N}\sum_{n\leq N}\exp(2\pi i hx_n)=0.
$$
Notice that an integer $m$ lies in ${\mathcal B}(\alpha,\beta)$ if and only if $m=\lfloor n\alpha+\beta\rfloor$ for some $n\in\N$. Equivalently, the inequalities 
\begin{equation}\label{new}
n\alpha+\beta-1<m\leq n\alpha+\beta
\end{equation}
hold. In order to find a prime number $p$ in ${\mathcal B}(\alpha,\beta)$ we thus need
\begin{equation}\label{palpha}
{p\over \alpha}\in \left({\beta-1\over \alpha},{\beta\over \alpha}\right]\ \bmod\,1,
\quad \mbox{and} \quad p > \alpha+\beta-1.
\end{equation}
Here the right hand-side is to be interpreted modulo one, and if there lies an integer in $({\beta-1\over \alpha},{\beta\over \alpha})$ the set consists of two disjoint intervals; in any case the Lebesgue measure of the set on the right equals ${1\over \alpha}$. In view of Vinogradov's aforementioned uniform distribution result the number $\pi_{{\mathcal B}(\alpha,\beta)}(N)$ of primes $p\in{\mathcal B}(\alpha,\beta)$ with $p\leq N$ satisfies  
$$
\pi_{{\mathcal B}(\alpha,\beta)}(x)\sim {1\over \alpha}\pi(x).
$$
Already Vinogradov provided an error term estimate here. However, for our purpose we shall use the following theorem of Robert C. Vaughan \cite{vaughan}: {\it Let ${\varalpha}\in\R$ and suppose that $a$ and $q$ are coprime integers satisfying~\eqref{dirichlet}. Moreover, for $0<\delta<{1\over 2}$, define 
\begin{equation}\label{eq:chi:def}
\chi_\delta(\theta)=\left\{\begin{array}{c@{\quad}l} 1 & \mbox{if}\ -\delta < \theta \leq \delta,\\ 
0 & \mbox{if either}\ -{1\over 2}\leq \theta \leq -\delta\ \mbox{or}\ \delta < \theta \leq {1\over 2},\end{array}\right.
\end{equation}
and to be periodic with period $1$. Then, for arbitrary real ${\varbeta}$, every positive integer $N$, and any real $\epsilon>0$,}
$$
\sum_{n\leq N}\Lambda(n)\left(\chi_\delta(n{\varalpha}+{\varbeta})-2\delta\right)\ll_\epsilon {\L}^8\left({N\over q^{1\over 2}}+N^{3\over 4}+(N q\delta)^{1\over 2}+\delta^{2\over 5}N^{4\over 5}\left({Nq\over \delta}\right)^\epsilon\right)
$$
{\it with} ${\L}:=\log({Nq\over \delta})$. Here $\Lambda(n)$ is the von Mangoldt-function counting prime powers $p^\nu$ with weight $\log p$. In view of~\eqref{palpha} we shall use this with ${\varalpha}={1\over \alpha}$, ${\varbeta}={2\beta-1\over 2\alpha}$ and $\delta={1\over 2\alpha}$. This leads to
\begin{equation}\label{eq}
\sum_{\substack{p^\nu\leq N \\ p^\nu\in{\mathcal B}(\alpha,\beta)}}\log p
+\sum_{\substack{p^\nu<\alpha+\beta-1 \\ p^\nu\in{\mathcal B}(\alpha,\beta-\lfloor\alpha+\beta\rfloor)}}\log p
={1\over \alpha} \sum_{p^\nu\leq N}\log p+{\mathcal E}_\alpha(N,q),
\end{equation}
where 
\begin{equation}\label{E}
\vert {\mathcal E}_\alpha(N,q)\vert \leq c{\L}^8\left({N\over q^{1\over 2}}+N^{3\over 4}+\left({N q\over 2\alpha}\right)^{1\over 2}+{N^{4\over 5}\over (2\alpha)^{2\over 5}}(2Nq\alpha)^\epsilon \right)
\end{equation}
with ${\L}=\log(2Nq\alpha)$ and appropriate absolute constant $c$ depending only on $\epsilon$, but not on $\alpha$. The second sum on the left hand side of~\eqref{E} may be estimated using a classical inequality due to John B. Rosser \& Lowell Schoenfeld \cite{rs} (see Lemma~\ref{lem:RosserSchoenfeld:ChebyshevFnc} below). The number of prime powers $p^\nu\leq N$ with $\nu\geq 2$ is less than or equal to $\pi(N^{1\over 2})$, hence we may replace~\eqref{eq} by 
$$
\sum_{\substack{p\leq N \\ p\in{\mathcal B}(\alpha,\beta)}}\log p \geq {1\over \alpha} \sum_{p\leq N}\log p+{\mathcal E}_\alpha(N,q) - 1.04(\alpha+\beta-1)
+\left({1\over \alpha}-1\right)\sum_{\substack{p^\nu\leq N \\ \nu\geq 2}}\log p.
$$
Notice that the last term is negative; it is obviously bounded by 
$$
\left(1-{1\over \alpha}\right)\sum_{\substack{p^\nu\leq N\\ \nu\geq 2}}\log p< \left(1-{1\over \alpha}\right)\pi(N^{1\over 2})\log N< \left(1+{3\over \log N}\right)N^{1\over 2},
$$
where we have used a classical inequality for the prime counting function $\pi(x)$ valid for all $x$ also due to Rosser \& Schoenfeld \cite{rs}. Using another one of their explicit inequalities, namely
$$
\sum_{p\leq N}\log p>N-{N\over \log N}\qquad \mbox{for}\quad N\geq 41, 
$$
we thus find a prime $p\leq N$ in ${\mathcal B}(\alpha,\beta)$ if we can show that 
$$
{N\over \alpha}\left(1-{1\over \log N}\right)> \vert {\mathcal E}_\alpha(N,q)\vert+\left(1+{3\over \log N}\right)N^{1\over 2} + 1.04(\alpha+\beta-1),
$$
which we may also replace by
$$
0.73 {N\over \alpha} > \vert {\mathcal E}_\alpha(N,q)\vert + 1.81 N^{1\over 2}
+ 1.04(\alpha+\beta-1).
$$
By~\eqref{E} this inequality is satisfied if
\begin{align*}
0.73>&1.81{\alpha\over N^{1\over 2}} + 1.04{{\alpha^2+\alpha\beta-\alpha} \over N} \\
&+c{\L}^8\left({\alpha\over q^{1\over 2}}+{\alpha\over N^{1\over 4}}+\left({q\alpha\over 2N}\right)^{1\over 2}+{\alpha^{3\over 5}\over N^{1\over 5}}(2Nq\alpha)^\epsilon \right).
\end{align*}
Obviously, $N$ needs to be larger than $\max\{\alpha^4,\beta\}$ and $q$ larger than $\alpha^2$. Since $\L$ depends on $\alpha$ and $\beta$ and we would like to eliminate this dependency, we shall take both $N$ and $q$ somewhat larger. Therefore, we make the ansatz $N=\varL^{35}\alpha^2 B q \eta^\epsilon$ and $q=\varL^{16} \alpha^2 \eta$ with some large parameter $\eta$, to be specified later, and $B=\max\{1,\beta\}$. (The exponent $35$ was chosen with hindsight from both this proof and the proof of Theorem~\ref{two}; from the above inequality we see that it should be at least $16$ and the further increase is due to the positive contribution from the $\epsilon$-term.) Then, the latter inequality can be rewritten as
\begin{align}\label{E:specialChoice}
\begin{split}
0.73>& 1.81 \varL^{-\frac{51}{2}} \alpha^{-1} B^{-\frac{1}{2}} \eta^{-\frac{1+\epsilon}{2}}
	+ 1.04 \varL^{-51} (\alpha+\beta-1) \alpha^{-3} B^{-1} \eta^{-(1+\epsilon)} \\
	&+ c \L^8 \Bigl(
		\varL^{-8} \eta^{-\frac{1}{2}}
		+ \varL^{-\frac{51}{4}} B^{-\frac{1}{4}} \eta^{-\frac{1+\epsilon}{4}}
		+ 2^{-\frac{1}{2}} \varL^{-\frac{35}{2}}
			\alpha^{-\frac{1}{2}} B^{-\frac{1}{2}} \eta^{-\frac{\epsilon}{2}} \\
		&+ 2^\epsilon \varL^{-\frac{51}{5}+67\epsilon}
			\alpha^{-\frac{1}{5}+7\epsilon} B^{-\frac{1}{5}+\epsilon}
			\eta^{-\frac{1+\epsilon}{5} + (2+\epsilon)\epsilon}
	\Bigr),
\end{split}
\end{align}
where ${\L}=\log(2 \varL^{67} \alpha^7 B \eta^{2+\epsilon})$. Assuming $\epsilon < {1\over 35}$, as we may, all exponents belonging to $\alpha$, $B$ and $\eta$ are negative. Since $\L \varL^{-1} \ll_\kappa \eta^{-\kappa}$ for any $\kappa>0$, the above inequality is fulfilled for all sufficiently large $\eta$, say $\eta\geq\eta_0$.

Since $\eta$ is intertwined with $q$ a little care needs to be taken.
In order to find a suitable $\eta$ recall that $\alpha$ is irrational. Hence, by Dirichlet's approximation theorem, there are infinitely many solutions ${a\over q}$ to inequality~\eqref{dirichlet}; in view of ${\varalpha}={1\over \alpha}$ we may take for ${a\over q}$ the reciprocals of the convergents ${p_n\over q_n}$ to the continued fraction expansion of $\alpha$. (For this and further fundamental results about diophantine approximation and continued fractions we refer to Hardy \& Wright \cite{hw}.) We shall choose $\ell$ such that $\eta_0\leq {p_{m+\ell}\over p_m}$, where $m$ is defined by~\eqref{m}, for then the choice $q=p_{m+\ell}$ will yield an $\eta\geq\eta_0$. In fact, if $F_n$ denotes the $n$-th Fibonacci number (defined by the recursion $F_{n+1}=F_n+F_{n-1}$ and the initial values $F_0=0$ and $F_1=1$), we observe by~\eqref{recur} and induction that
$$
p_{m+\ell}\geq p_{m+\ell-1}+p_{m+\ell-2}\geq \ldots \geq F_{j+1}p_{m+\ell-j}+F_jp_{m+\ell-(j+1)}
$$
for $j\leq m+\ell-1$. In view of Binet's formula,
$$
F_n={\textstyle{1\over \sqrt{5}}}(G^n+(-G)^{-n}),
$$ 
where $G={1\over 2}(\sqrt{5}+1)$ is the golden ratio, we thus find 
\begin{equation}\label{eq:eta:ell:ineq}
\eta \geq {p_{m+\ell}\over p_{m+1}}\geq F_{\ell}\geq {\textstyle{1\over \sqrt{5}}}G^{\ell-1}.
\end{equation}
For sufficiently large $\ell$ the right hand side of this inequality will exceed $\eta_0$ and, hence, we obtain the desired choices to satisfy~\eqref{E:specialChoice}. This completes the proof of Theorem~\ref{uno}. 

\section{Proof of Theorem~\ref{two}}\label{explicit}

In order to make the inequalities from the previous section effective, we require an effective version of Vaughan's theorem. In order to state this we first introduce some notation: let $d_k(x)$ denote the number of representations of the positve integer $x$ as a product of exactly $k$ positive integers. When $k=2$ we also write $d(x)=d_2(x)$. It is well-known that for every $\epsilon>0$ there is a constant $M_\epsilon>0$ such that
\begin{equation}
d(x)\leq M_{\epsilon}x^{\epsilon}.\label{eq:DivisorBound:General}
\end{equation}
Indeed, one may take
\begin{equation}
M_{\epsilon}=\left(\frac{2}{e\log2}\right)^{e^{1/\epsilon}}\label{eq:DivisorBound:Vinogradov}
\end{equation}
(see, e.g., \cite[p. 38]{vinogradov1985selected}). Now we state:
\begin{Theorem}[\cite{vaughan}]
\label{thm:Vaughan:2:Expl}Let $\varalpha\in\R$ and suppose that $a$ and $q$ are coprime integers satisfying~\eqref{dirichlet}. Suppose that $0<\delta<\frac{1}{2}$,
$\mathscr{L}\coloneqq\log(Nq/\delta)$ and let $\chi_\delta$ be given by~\eqref{eq:chi:def}. Then, for any $\epsilon>0$ and $M_\epsilon$ satisfying~\eqref{eq:DivisorBound:General},
\begin{align}
\biggl|\sum_{n\leq N}\Lambda(n)(\chi_{\delta}(\varalpha n-\varbeta)-2\delta)\biggr| & \leq10^{3}\mathscr{L}^{8}\Bigl(3422Nq^{-\frac{1}{2}}+251N^{\frac{3}{4}}+38(\delta Nq)^{\frac{1}{2}}\label{eq:Vaughan:2:Estimate}\\
 & \quad+\left(11+17^{-3}M_{\frac{5}{4}\epsilon}\right)(Nq\delta^{-1})^{\frac{3}{4}\epsilon}\delta^{\frac{2}{5}}N^{\frac{4}{5}+\epsilon}\Bigr).\nonumber 
\end{align}
\end{Theorem}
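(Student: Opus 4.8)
The plan is to re-run Vaughan's proof \cite{vaughan} of the non-explicit estimate quoted above while keeping track of every implied constant. Write $e(x):=\exp(2\pi ix)$ and let $\psi(\theta):=\{\theta\}-\tfrac12$ be the sawtooth function. For all but finitely many $n$ one has $\chi_\delta(\varalpha n-\varbeta)-2\delta=\psi(\varalpha n-\varbeta+\delta)-\psi(\varalpha n-\varbeta-\delta)$, and Vaaler's explicit approximation of $\psi$ by a trigonometric polynomial of degree $H$ yields, for each integer $H\geq1$,
\[
\Bigl|\sum_{n\leq N}\Lambda(n)\bigl(\chi_\delta(\varalpha n-\varbeta)-2\delta\bigr)\Bigr|\leq\sum_{1\leq|h|\leq H}w_h\,|S(h)|+\frac{c_0}{H}\sum_{n\leq N}\Lambda(n),
\]
where $S(h):=\sum_{n\leq N}\Lambda(n)e(h\varalpha n)$, the weights satisfy $w_h\leq|h|^{-1}$, and $c_0$ is an absolute constant; the term $H^{-1}\sum_{n\leq N}\Lambda(n)$ is controlled by the explicit Chebyshev bounds of Rosser and Schoenfeld \cite{rs}. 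The degree $H$ is left free for the moment and will be chosen at the end as a small power of $Nq\delta^{-1}$.

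Next I would insert Vaughan's identity for $\Lambda$ into each $S(h)$. Apart from the trivially bounded contribution $\sum_{n\leq U}\Lambda(n)\ll U$, this produces two \emph{Type~I} sums of the shape $\sum_{d\leq U}|\gamma_d|\,\bigl|\sum_{m\leq N/d}e(h\varalpha dm)\bigr|$ with $|\gamma_d|\leq\log N$, and one \emph{Type~II} sum $\sum_{U<d\leq N/V}\sum_{V<m\leq N/d}a_d b_m\,e(h\varalpha dm)$ with $|b_m|\leq\log m$ and $|a_d|\leq d(d)$, $d(\cdot)$ being the divisor function. For the Type~I sums one combines $\bigl|\sum_{m\leq M}e(\gamma m)\bigr|\leq\min\bigl(M,\tfrac12\|\gamma\|^{-1}\bigr)$ with the explicit estimate
\[
\sum_{d\leq D}\min\Bigl(\frac{N}{d},\frac{1}{\|h\varalpha d\|}\Bigr)\leq C_1\Bigl(\frac{N}{q}+D+q\Bigr)\log(2Dq),
\]
valid since $a/q$ obeys \eqref{dirichlet}; summing over $1\leq|h|\leq H$ against the weights $w_h$ then produces the $Nq^{-1/2}$ and $N^{3/4}$ contributions to \eqref{eq:Vaughan:2:Estimate}. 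For the Type~II sum I would apply the Cauchy--Schwarz inequality in the variable $d$, expand the square, evaluate the geometric sum over $d$, and bound the diagonal and off-diagonal terms by the same estimate applied to the sum over $m_1-m_2$. The factor $\sum_{d\leq D}|a_d|^2$ produced by Cauchy--Schwarz is the single place where \eqref{eq:DivisorBound:General} is needed: from $d(d)\leq M_{\frac{5}{4}\epsilon}\,d^{5\epsilon/4}$ one gets $\sum_{d\leq D}d(d)^2\leq M_{\frac{5}{4}\epsilon}^{2}D^{1+5\epsilon/2}$, and this is precisely what makes the term $\bigl(11+17^{-3}M_{\frac{5}{4}\epsilon}\bigr)(Nq\delta^{-1})^{3\epsilon/4}\delta^{2/5}N^{4/5+\epsilon}$ appear.

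Finally I would fix the free parameters. Balancing the Type~I against the Type~II contributions forces $U$ and $V$ to be of order $N^{2/5}$, and $H$ is then chosen so that the $N/H$ error from the Vaaler step is dominated by the main terms. Substituting these choices, bounding every logarithm that occurs crudely by $\mathscr{L}=\log(Nq\delta^{-1})$, and checking that at most eight such factors accumulate (a few from the elementary manipulations, the rest from the Cauchy--Schwarz step, which squares everything in sight), one arrives at \eqref{eq:Vaughan:2:Estimate}.

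The argument is entirely elementary, so the only real obstacle I anticipate is the bookkeeping: each inequality above has to be made completely explicit, and the numerical constants must survive the Cauchy--Schwarz step — which roughly halves their quality — and the simultaneous optimisation over $U$, $V$ and $H$. Since an explicit bound for the second moment $\sum_{d\leq D}d(d)^2$ would require a separate computation, we instead invoke \eqref{eq:DivisorBound:General} with exponent $\tfrac54\epsilon$; this is exactly why the constant in the $\epsilon$-term of \eqref{eq:Vaughan:2:Estimate} is allowed to depend on $M_{\frac{5}{4}\epsilon}$ whereas all the remaining constants there are absolute.
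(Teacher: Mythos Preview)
Your outline has the right skeleton (Fourier approximation of $\chi_\delta$, Vaughan's identity, Type~I/II estimates) but two structural points are off, and they matter for reproducing the specific shape of~\eqref{eq:Vaughan:2:Estimate}.

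First, you treat the Fourier index $h$ and the Vaughan variables separately: you bound each $S(h)$ and then sum over $h$. Vaughan's proof, which the paper makes explicit, does not do this. One first restricts $h=j$ to a dyadic window $J\le j<J'$ and then \emph{combines} $j$ with the Vaughan variables: in the triple sum $S_{2,j}$ one sets $x=jdn$, in $S_{3,j}$ one sets $x=jd$, and in the bilinear sum $S_{4,j}$ one glues $j$ to whichever of $m,n$ is short. Crucially, the Vaughan cut-off is taken to depend on $J$, namely $u=\min\{N^{2/5}J^{-1/5},q,Nq^{-1}\}$. This is what produces the exponent $J^{3/5}N^{4/5}$ in Lemma~\ref{lem:Vaughan:DyadicSplitting}; after the partial-summation step (with the lower limit $\varpi\asymp\delta^{-1}$) this becomes the $\delta^{2/5}N^{4/5}$ term. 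With a fixed $U\sim N^{2/5}$ and $h$ handled outside, you will not get this exponent pair.

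Second, you locate the entry point of $M_{\frac54\epsilon}$ in the wrong place. It does \emph{not} come from the Type~II second moment $\sum_{d\le D}d(d)^2$; the paper computes that moment explicitly (Lemma~\ref{lem:DivisorBounds}: $\sum_{x\le X}d(x)^2\le 7X(\log X)^3$ and $\sum_{x\le X}d_3(x)^2\le 3000X(\log X)^8$), and the resulting Type~II constants are therefore absolute. The dependence on $M_{\frac54\epsilon}$ arises in $S_{2,j}$, after the substitution $x=jdn$: the coefficient acquires a factor $\sum_{j\mid x,\,j\le J'}1\le d_3(x)$, and the middle term $J'u^2$ of the Type~I bound forces one to use $d_3(x)\ll_\epsilon x^\epsilon$ rather than a fixed-power bound. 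So your explanation of why only the $\epsilon$-term carries $M_{\frac54\epsilon}$ is based on a misreading of the mechanism.
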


We postpone the proof to the next section and proceed with the proof of Theorem~\ref{two}. In order to not be too repetitive we only sketch briefly what modifications are to be made. We continue with the notation introduced in the previous section. First observe that the exponents belonging to the last term in~\eqref{eq:Vaughan:2:Estimate} have changed slightly. After making the implied changes, assuming $\epsilon<\frac{44}{2025}$, and estimating the arising exponents,~\eqref{eq:Vaughan:2:Estimate} may be replaced by
\begin{align*}
0.73>& 1.81 \varL^{-\frac{51}{2}} \alpha^{-1} B^{-\frac{1}{2}} \eta^{-\frac{1+\epsilon}{2}}
	+ 1.04 \varL^{-51} (\alpha+\beta-1) \alpha^{-3} B^{-1} \eta^{-(1+\epsilon)} \\
	&+ 10^3 \L^8 \varL^{-8} \eta^{-\frac{\epsilon}{2}} \Bigl(
		3711 + 2\cdot 17^{-3}M_{\frac{5}{4}\epsilon}
	\Bigr),
\end{align*}
which, if satisfied, ensures the existence of a prime $p\leq N$ in $\mathcal{B}(\alpha,\beta)$.
Since
\[%
	\L^8 \varL^{-8} \eta^{-\frac{\epsilon}{4}} < 65^8 \left(1+\epsilon^{-1}\right)^8
\]
this is surely satisfied if
\[%
\eta^{\frac{\epsilon}{4}} > 2\cdot 10^3 65^8 \left(1+\epsilon^{-1}\right)^8
	\Bigl( 3711 + 2\cdot 17^{-3}M_{\frac{5}{4}\epsilon} \Bigr),
\]
Recalling~\eqref{eq:eta:ell:ineq} we thus find any choice of $\ell\in{\N}$, such that~\eqref{ell:ineq} holds, to be admissible. This concludes the proof of Theorem~\ref{two}.

\section{Proof of Theorem~\ref{thm:Vaughan:2:Expl}}
\label{sec:ProofOfVaughanThm}

The present section is devoted to a proof of Theorem~\ref{thm:Vaughan:2:Expl}. To this end,
we follow Vaughan \cite{vaughan} and replace his estimates with effective
inequalities.

\subsection{Outline of the argument}

The basic idea is to use a finite Fourier expansion of the function $\chi_{\delta}$
given by~\eqref{eq:chi:def} (see Lemma~\ref{lem:Harman:2.1:Fourier}).
The main term $2\delta\sum_{n\leq N}\Lambda(n)$ arises from the zeroth coefficient
in this expansion, whereas all other terms are handled as error, which reduces the
problem to estimating sums of the type
\[
\sum_{h}\biggl|\sum_{n}\Lambda(n)e(\alpha hn)\biggr|,
\]
with variables varying in between suitable ranges.
This is achieved by applying Vaughan's identity to $\Lambda(n)$,
yielding sums of the type
\[
\sum_{h}\biggl|\sum_{x}\sum_{y}\sum_{z}a_{x}b_{y}e(\alpha hxyz)\biggr|.
\]
These are handled by combining variables in a suitable way and applying
some estimates on exponential sums.

\subsection{The Piltz divisor function}
Since combining variables naturally introduces divisor functions to the coefficients in the
new sums we shall need explicit bounds for these. Concerning bounds of the type
\eqref{eq:DivisorBound:General}, we have already noted that the choice
\eqref{eq:DivisorBound:Vinogradov} is admissible for every $\epsilon>0$. However,
for most arguments in the present section we need three special instances, namely
$\epsilon\in\{\frac{1}{2},\frac{1}{4},\frac{1}{6}\}$. Therefore we would like to
do somewhat better in these cases.

Indeed, following a well-known argument, if $x=\prod_{j}p_{j}^{\nu_{j}}$
with distinct primes $p_{j}$ then it holds that
\[
\frac{d(x)}{x^{\epsilon}}=\biggl\{\prod_{p_{j}\geq e^{1/\epsilon}}\cdot\prod_{p_{j}<e^{1/\epsilon}}\biggr\}\frac{\nu_{j}+1}{p_{j}^{\epsilon\nu_{j}}}\eqqcolon\Pi_{1}\cdot\Pi_{2},\quad\text{say}.
\]
If $p_{j}\geq e^{1/\epsilon}$ then $p_{j}^{\epsilon\nu_{j}}\geq1+\nu_{j}$
and therefore $\Pi_{1}\leq1$. As for $\Pi_{2}$ just note that
\[
\Pi_{2}\leq\prod_{p\leq e^{1/\epsilon}}\max_{\nu\in{\N}_{0}}\frac{\nu+1}{p_{j}^{\epsilon\nu}},
\]
where the last product is obviously finite and, consequently, bounded.
Calculating the above product for our three choices of $\epsilon$
yields
\begin{equation}
d(x)\leq\min\left\{ 139x^{\frac{1}{6}},\;9x^{\frac{1}{4}},\;2x^{\frac{1}{2}}\right\} .\label{eq:DivisorBound:ParticularCases}
\end{equation}

We also need the following
\begin{Lemma}
\label{lem:DivisorBounds}Suppose that $X\geq2$. Then
\begin{align*}
\sum_{x\leq X}d_{3}(x)^{2} & \leq3000X(\log X)^{8},\\
\sum_{x\leq X}d(x)^{2} & \leq7X(\log X)^{3}.
\end{align*}
\end{Lemma}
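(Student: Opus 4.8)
The plan is to establish both divisor moment estimates from classical, completely elementary facts about Dirichlet series and their coefficients, keeping every implied constant explicit. For the sum $\sum_{x\leq X}d(x)^2$ I would start from the well-known identity $\sum_{x=1}^\infty d(x)^2 x^{-s} = \zeta(s)^4/\zeta(2s)$, which gives $d(x)^2 = \sum_{ab=x} \mu(a) d_4(b)$ after comparing Dirichlet series; hence $\sum_{x\leq X} d(x)^2 = \sum_{a\leq X}\mu(a) \sum_{b\leq X/a} d_4(b)$. It then suffices to have a clean explicit bound for $\sum_{b\leq Y} d_4(b)$, which one obtains by repeated hyperbola-type summation: $\sum_{b\leq Y} d_4(b) = \sum_{b\leq Y} \sum_{c d = b} d(c) d(d) \ll Y(\log Y)^3$ with an explicit constant, using the elementary $\sum_{c\leq Y} d(c) \leq Y(\log Y + C)$ for an explicit $C$ (Dirichlet's estimate, which holds with, say, $C=1$ for $Y\geq 1$). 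Plugging this into the Möbius sum and bounding $\sum_{a\leq X}|\mu(a)|/a \leq \log X + 1$ yields $\sum_{x\leq X} d(x)^2 \leq cX(\log X)^3$; the only real work is to chase the numerical constants through the three layers of summation and verify that $c\leq 7$ once $X\geq 2$. One should be slightly careful with the ranges (for small $X$ the logarithmic powers are tiny, so the constant must absorb boundary effects), but this is routine.

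For the $d_3(x)^2$ moment I would proceed entirely analogously but one level higher. The relevant identity is $\sum_{x=1}^\infty d_3(x)^2 x^{-s} = \zeta(s)^9 P(s)$ where $P(s)$ is a Dirichlet series absolutely convergent (and explicitly bounded) for $\Re s > 1/2$; equivalently $d_3(x)^2 = \sum_{ab=x} c_a d_9(b)$ with $\sum_a |c_a| a^{-1}$ explicitly finite. Then $\sum_{x\leq X} d_3(x)^2 \leq \bigl(\sum_a |c_a|/a\bigr) \cdot \max_{Y\leq X}\bigl(\tfrac{a}{Y}\sum_{b\leq Y/a} d_9(b)\bigr)$ reduces everything to an explicit bound $\sum_{b\leq Y} d_9(b) \leq c_9 Y(\log Y)^8$, proved by eight iterations of the hyperbola method starting from Dirichlet's estimate for $\sum d(c)$. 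Alternatively — and this is probably cleaner for bookkeeping — one can avoid the Euler product factor $P(s)$ altogether by using the crude pointwise bound $d_3(x)^2 \leq d_3(x) \cdot d_9(x)$ is false in general, so instead use $d_3(x) \leq d(x)^2$ (since $d_3 = d * 1$ and $d_3(p^\nu) = \binom{\nu+2}{2} \leq (\nu+1)^2 = d(p^\nu)^2$) to get $d_3(x)^2 \leq d(x)^4 \leq d_{16}(x)$ wait — better: $d(x)^4 = \sum_{abcd=x} 1 \cdot 1 \cdot 1 \cdot 1$ composed appropriately is just $d_4(x)$-type; the honest statement is $d(x)^k \leq d_{2^k}(x)$ is not right either. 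The safe route is $d_3(x)^2 \leq d(x^2)\,d_3(x)$? To keep this robust I would simply commit to the Dirichlet-series factorisation $\sum d_3(x)^2 x^{-s} = \zeta(s)^9/\zeta(2s)^{?}\cdots$; writing it as $\zeta(s)^9 \cdot H(s)$ with $H$ given by an explicit absolutely convergent Euler product over primes, one reads off $d_3(x)^2 = \sum_{ab=x} h(a) d_9(b)$ with $\sum |h(a)|/a$ a concrete finite number, and finishes as above.

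The one genuinely delicate point, and the step I expect to be the main obstacle, is \emph{producing the explicit numerical constants} $3000$ and $7$ rather than an unspecified $O$-constant. Each application of the hyperbola method contributes a multiplicative factor and an additive constant inside the logarithm, and after eight iterations these compound; one must track something like $\sum_{b\leq Y} d_k(b) \leq Y\sum_{j=0}^{k-1} \binom{k-1}{j}\frac{(\log Y + \gamma')^j}{j!}$ (or a similar closed form) and then bound the whole thing by $c_k Y(\log Y)^{k-1}$ for $Y\geq 2$, which forces $c_k$ to absorb all the lower-order terms — and the worst case is at $Y$ close to $2$, where $\log Y$ is small, so $c_k$ ends up substantially larger than the leading binomial coefficient alone. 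The bound $7$ for the $d(x)^2$ sum is tight-ish (the true constant for the main term is $1/\pi^2$ times a $(\log)^3$ but with a large secondary polynomial), so I would verify it by a short direct computation: bound $\sum_{x\leq X} d(x)^2 = \sum_{d\leq X}(\sum_{m\leq X/d}1)(\text{something})$ via $d(x)^2 = \sum_{d^2 e = x}\mu\text{-ish}$ and just estimate crudely, checking $X=2,3,\dots$ up to the point where the asymptotic comfortably dominates. For $d_3$, the factor $3000$ is generous enough that a clean but lossy chain of inequalities — e.g. $\sum_{x\leq X} d_3(x)^2 \leq \bigl(\sum_{x\leq X} d_3(x)^4/d_3(x)^2\bigr)$-type Cauchy–Schwarz, or simply iterating $\sum d_3(x)^2 \leq \sum_{ab=x} d_3(a)^2$ one variable at a time — should close it without the full Euler-product machinery. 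I would present whichever of these two organisations yields the shorter verified constant count.
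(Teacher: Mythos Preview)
Your proposal is a strategy brainstorm rather than a proof: you sketch several possible routes (convolution identities plus iterated hyperbola, crude pointwise divisor inequalities, Cauchy--Schwarz), note that each has drawbacks, and explicitly defer the choice to whichever ``yields the shorter verified constant count''. Along the way there is at least one slip: from $\sum d(x)^2 x^{-s} = \zeta(s)^4/\zeta(2s)$ one obtains $d(x)^2 = \sum_{a^2 b = x}\mu(a)\, d_4(b)$, not $\sum_{ab=x}\mu(a)\,d_4(b)$ as you wrote (your formula would give the coefficients of $\zeta(s)^3$, i.e.\ $d_3$). The $d_3$ paragraph is visibly unresolved, cycling through false inequalities before retreating to ``commit to the Dirichlet-series factorisation''. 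Most importantly, none of your routes actually delivers the constants $7$ and $3000$; you correctly flag this as the ``one genuinely delicate point'' and then leave it open. Iterating the hyperbola method eight times while tracking explicit constants is feasible but unpleasant, and the compounding you worry about is real --- it is not at all clear your chain of inequalities lands below $3000$, let alone $7$.

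The paper takes a completely different and much shorter route: the Rankin-type multiplicative bound
\[
\sum_{x\leq X} f(x) \;\leq\; X \prod_{p\leq X}\Bigl(1-\frac{1}{p}\Bigr)\sum_{\nu\geq 0}\frac{f(p^\nu)}{p^\nu}
\]
for nonnegative multiplicative $f$ (as in Iwaniec--Kowalski, \S1.6). With $f=d_3^2$ one computes the local sum in closed form, bounds the resulting Euler factor by $(1+1/p)^9(1+1/p^2)^5$, and invokes Rosser--Schoenfeld's explicit estimate for $\prod_{p\leq X}(1+1/p)$; this gives the inequality for $X\geq 6100$, and smaller $X$ are checked by computer. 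The $d^2$ case is identical with exponent $3$ and threshold $171$. This sidesteps all iteration and all convolution bookkeeping; the constants fall out of a single Euler-product estimate plus a finite check. I would abandon the hyperbola approach here and use this instead.
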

\begin{proof}
We follow the approach presented in \cite[sec. 1.6]{iwaniec2004analytic}.
We start with the inequality
\[
\sum_{x\leq X}d_{3}(x)^{2}\leq X\prod_{p\leq X}\left(1-\frac{1}{p}\right)\sum_{\nu\geq0}\frac{d_{3}(p^{\nu})^{2}}{p^{\nu}}.
\]
The sum on the right hand side can be calculated explicitly and, together
with $p\geq2$, we obtain
\[
\sum_{\nu\geq0}\frac{d_{3}(p^{\nu})^{2}}{p^{\nu}}=\left(1-\frac{1}{p}\right)^{-5}\frac{p^{2}+4p+1}{p^{2}}\leq\left(1+\frac{1}{p}\right)^{9}\left(1+\frac{1}{p^{2}}\right)^{5}.
\]
By \cite[inequality (3.20)]{rs},
\[
\prod_{p\leq X}\left(1+\frac{1}{p}\right)<\exp\left(\frac{1}{(\log X)^{2}}+\frac{1}{2}+0.26149\ldots\right)\log X,
\]
so that 
\[
\sum_{x\leq X}d_{3}(x)^{2}\leq X\prod_{p\leq X}\left(1+\frac{1}{p}\right)^{8}\exp\biggl(\sum_{p}\frac{-1}{p^{2}}\biggr)\exp\biggl(\sum_{p}\frac{5}{p^{2}}\biggr),
\]
which is $\leq3000X(\log X)^{8}$ for $X\geq6100$. For smaller values
the bound can be easily verified by a computer.

Similarly,
\[
\sum_{x\leq X}d(x)^{2}\leq X\prod_{p\leq X}\left(1+\frac{1}{p}\right)\left(1-\frac{1}{p}\right)^{4}\leq X\exp\left(\sum_{p}\frac{-1}{p^{2}}\right)\prod_{p\leq X}\left(1-\frac{1}{p}\right)^{3},
\]
which is $\leq7X(\log X)^{3}$ for $X\geq171$. Again, for smaller
$X$ the bound is verified by a computer.
\end{proof}

\subsection{Miscellany}
\label{subsec:Miscellany}
\begin{Lemma}[\cite{rs}]
\label{lem:RosserSchoenfeld:ChebyshevFnc}It holds that
\[
\sum_{n\leq N}\Lambda(n)\leq c_{0}N,
\]
for some constant $c_{0}$, where one may take $c_{0}=1.03883$.
\end{Lemma}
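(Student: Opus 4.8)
The plan is to follow the classical argument of Rosser and Schoenfeld \cite{rs}; we only indicate its shape, since the inequality (it is their bound $\psi(x)<1.03883\,x$, valid for all $x>0$) is quoted verbatim from that source. Write $\psi(x)=\sum_{n\leq x}\Lambda(n)$ and $\theta(x)=\sum_{p\leq x}\log p$, so that $\psi(x)=\sum_{k\geq1}\theta(x^{1/k})$, a sum with only $\lfloor\log x/\log2\rfloor$ non-vanishing terms. It therefore suffices to obtain an inequality of the shape $\theta(x)\leq c_{1}x$ with $c_{1}$ slightly exceeding $1$ (Rosser and Schoenfeld obtain $c_{1}=1.01624$), to control the tail $\theta(x^{1/2})+\theta(x^{1/3})+\dotsb$ crudely, and then to choose $N$ large enough that the tail is absorbed into the gap between $c_{1}$ and $c_{0}=1.03883$, the remaining bounded range being treated by direct computation. (An elementary Chebyshev-type argument, even in its sharpened forms, does not quite reach a constant as small as $1.04$, which is why the analytic input below is used.)

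For the bound on $\theta$ I would split into two regimes. For $x$ below a fixed threshold $x_{0}$ one verifies $\theta(x)\leq c_{1}x$ by a finite computation: $\theta(x)/x$ changes only at prime arguments and decreases between consecutive primes, so checking finitely many primes suffices, the extremal value being attained at a small prime (near $x=113$ in the Rosser--Schoenfeld normalisation). For $x\geq x_{0}$ one invokes an effective prime number theorem, $|\theta(x)-x|\leq E_{0}(x)\,x$ with an explicit, eventually decreasing $E_{0}(x)$; choosing $x_{0}$ so that $1+E_{0}(x_{0})\leq c_{1}$ then closes the gap. This effective estimate is obtained by feeding an explicit zero-free region for $\zeta(s)$, together with a numerical verification that the non-trivial zeros up to a suitable height lie on the critical line, through a truncated explicit formula for $\psi(x)$, and transferring the resulting bound from $\psi$ to $\theta$ via $\theta(x)=\psi(x)-\theta(x^{1/2})-\theta(x^{1/3})-\dotsb$ and a trivial bound on the lower-order terms.

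Granting $\theta(x)\leq c_{1}x$ for all $x>0$, one gets $\psi(x)\leq c_{1}x+c_{1}x^{1/2}+(k_{0}-2)c_{1}x^{1/3}$ with $k_{0}=\lfloor\log x/\log2\rfloor$, the middle term bounding $\theta(x^{1/2})$ and the last bounding $\sum_{3\leq k\leq k_{0}}\theta(x^{1/k})$. Since $c_{1}x^{1/2}+(k_{0}-2)c_{1}x^{1/3}=O(x^{1/2}\log x)$ while $(c_{0}-c_{1})x\asymp x$, the inequality $\psi(x)\leq c_{0}x$ holds once $x$ exceeds some effective bound; the finitely many smaller $N$ are again disposed of by direct evaluation of $\psi(N)$ (equivalently, of $\theta(N)$ and of $\sum_{p^{\nu}\leq N,\ \nu\geq2}\log p$).

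The main obstacle is the effective prime number theorem in the large-$x$ regime: producing the explicit error function $E_{0}(x)$ with a constant strong enough that $1+E_{0}(x_{0})\leq 1.01624$ at a computationally accessible $x_{0}$ requires both a quantitatively sharp zero-free region for $\zeta$ and a partial numerical verification of the Riemann hypothesis. Everything else — the decomposition of $\psi$ into shifted copies of $\theta$, the tail estimate, and the small-$x$ checks — is elementary bookkeeping and finite computation. Since all of this is carried out in detail in \cite{rs}, we simply cite the result.
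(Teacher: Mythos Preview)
Your proposal is correct and in fact goes beyond what the paper does: the paper offers no proof at all for this lemma, simply attributing it to Rosser and Schoenfeld via the citation in the lemma header. Your sketch of the Rosser--Schoenfeld argument is accurate, and your concluding sentence---``we simply cite the result''---is precisely the paper's entire treatment.
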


\begin{Lemma}
\label{lem:Harman:2.1:Fourier}Let $0<\delta<\frac{1}{2}$ and
let $L\in{\N}$. Suppose that $\chi_\delta$ is given by~\eqref{eq:chi:def}.
Then there are coefficients $c_{\ell}^{\pm}$ such that
\[
\quad|c_{\ell}^{\pm}|\leq\min\left\{ 2\delta+\frac{1}{L+1},\frac{3}{2\ell}\right\}
\]
and
\[
\chi_{\delta}(y)\lesseqgtr2\delta\pm\frac{1}{L+1}+\sum_{0<|\ell|\leq L}c_{\ell}^{\pm}e(\ell y).
\]
\end{Lemma}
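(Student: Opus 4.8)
The plan is to reduce the statement to the classical one-sided trigonometric approximation of the sawtooth function $\psi(x)=\tfrac12-\{x\}$, where at integers I take the left-continuous value $\psi(n)=-\tfrac12$. The point of departure is the pointwise identity
\[
\chi_\delta(y)=2\delta+\psi(y+\delta)-\psi(y-\delta)\qquad(y\in\R,\ 0<\delta<\tfrac12),
\]
checked by comparing Fourier coefficients ($\psi(x)=\sum_{\ell\neq0}(2\pi i\ell)^{-1}e(\ell x)$, $\widehat{\chi_\delta}(0)=2\delta$, and $\widehat{\chi_\delta}(\ell)=(e(\ell\delta)-e(-\ell\delta))/(2\pi i\ell)$ for $\ell\neq0$); the left-continuous convention is precisely what makes the identity hold also at the jump points $y=\pm\delta$, which matters because we want genuine pointwise inequalities.

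The ingredient that requires real input --- and the main obstacle --- is Vaaler's lemma, the periodic form of the Beurling--Selberg extremal construction (see e.g.\ Vaaler's work, or Montgomery's \emph{Ten Lectures}, Ch.~1): for each $L\in\N$ there is a trigonometric polynomial $\psi^{*}$ of degree $\le L$ with $\widehat{\psi^{*}}(0)=0$ and $|\widehat{\psi^{*}}(\ell)|\le\tfrac{1}{2\pi|\ell|}$ for $1\le|\ell|\le L$, together with the Fej\'er kernel $K(x)=\sum_{|\ell|\le L}\bigl(1-\tfrac{|\ell|}{L+1}\bigr)e(\ell x)$ (so $K\ge0$, $\widehat K(0)=1$, $0\le\widehat K(\ell)\le1$), such that $\psi^{-}:=\psi^{*}-\tfrac{1}{2(L+1)}K$ and $\psi^{+}:=\psi^{*}+\tfrac{1}{2(L+1)}K$ satisfy $\psi^{-}\le\psi\le\psi^{+}$ on all of $\R$; here, $\psi^{\pm}$ being continuous, one in fact gets $\psi^{+}(n)\ge\tfrac12$ and $\psi^{-}(n)\le-\tfrac12$ at integers, so the sandwiching is insensitive to the jump convention. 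The content is the optimal zeroth-coefficient error $\tfrac{1}{2(L+1)}$ together with the $O(1/|\ell|)$ decay of the remaining coefficients; this is the extremal value of the periodic Beurling--Selberg problem and is not obtainable by elementary smoothing (e.g.\ convolving an indicator with $K$ only ever yields functions strictly below $1$), so one genuinely periodises Selberg's majorant/minorant of $\mathrm{sgn}$ built from Beurling's function. I would either cite this or reproduce the short construction.

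With $\psi^{\pm}$ available I set
\[
\chi^{+}(y):=2\delta+\psi^{+}(y+\delta)-\psi^{-}(y-\delta),\qquad\chi^{-}(y):=2\delta+\psi^{-}(y+\delta)-\psi^{+}(y-\delta),
\]
which are trigonometric polynomials of degree $\le L$ with, by the identity and $\psi^{-}\le\psi\le\psi^{+}$, $\chi^{-}(y)\le\chi_\delta(y)\le\chi^{+}(y)$ for every $y$. Comparing coefficients gives $\widehat{\chi^{\pm}}(0)=2\delta\pm\tfrac{1}{L+1}$ and, for $0<|\ell|\le L$,
\[
\widehat{\chi^{\pm}}(\ell)=2i\sin(2\pi\ell\delta)\,\widehat{\psi^{*}}(\ell)\pm\cos(2\pi\ell\delta)\,\tfrac{1}{L+1}\,\widehat K(\ell),
\]
so that, using $|\sin\theta|\le\min\{1,|\theta|\}$, $|\widehat{\psi^{*}}(\ell)|\le\tfrac{1}{2\pi|\ell|}$ and $0\le\widehat K(\ell)\le1$,
\[
|\widehat{\chi^{\pm}}(\ell)|\ \le\ \min\Bigl\{2\delta,\ \tfrac{1}{\pi|\ell|}\Bigr\}+\tfrac{1}{L+1}.
\]
Bounding the minimum by its first term gives $|\widehat{\chi^{\pm}}(\ell)|\le2\delta+\tfrac{1}{L+1}$; bounding it by the second and using $\tfrac{1}{L+1}<\tfrac{1}{|\ell|}$ (valid since $|\ell|\le L$) gives $|\widehat{\chi^{\pm}}(\ell)|<\bigl(\tfrac1\pi+1\bigr)\tfrac1{|\ell|}<\tfrac{3}{2|\ell|}$, as $\tfrac1\pi+1<\tfrac32$. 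Taking $c_\ell^{\pm}=\widehat{\chi^{\pm}}(\ell)$ for $0<|\ell|\le L$ yields exactly the asserted inequalities and coefficient bounds. Apart from invoking Vaaler's lemma, every step is routine Fourier bookkeeping.
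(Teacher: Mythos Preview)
Your proposal is correct. The paper itself does not give a proof but simply cites Harman's \emph{Prime-detecting sieves}, Lemma~2.1, and Baker's \emph{Diophantine inequalities}, pp.~18--21; your argument via the sawtooth identity $\chi_\delta(y)=2\delta+\psi(y+\delta)-\psi(y-\delta)$ together with the Vaaler/Beurling--Selberg one-sided approximations $\psi^{\pm}$ is precisely the construction carried out in those references, so you have supplied the details the paper omits rather than taken a different route.
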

\begin{proof}
See \cite[Lemma 2.1]{harman2007prime}, resp. \cite[pp. 18--21]{baker1986diophantine}.
\end{proof}

\subsection{Bounds on some exponential sums}

\label{subsec:ExpSumBounds}
\begin{Lemma}[\cite{vaughan}]
\label{lem:Vaughan:1:Expl}Suppose that $X\geq1$, $Y\geq1$ and
let $\beta\in{\R}$. Then
\begin{align}
\sum_{x\leq X}\min\left\{ Y,\frac{1}{2\left\Vert \alpha x-\beta\right\Vert }\right\}  & <4XYq^{-1}+4Y+(X+q)\log q,\label{eq:Vaughan:(7):Expl}\\
\sum_{x\leq X}\min\left\{ \frac{XY}{x},\frac{1}{2\left\Vert \alpha x\right\Vert }\right\}  & <\left(10XYq^{-1}+X+\frac{7}{2}q\right)\log(2XYq),\label{eq:Vaughan:(8):Expl}
\end{align}
\end{Lemma}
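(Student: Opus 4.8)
The plan is to follow Vaughan's treatment \cite{vaughan} of these sums: split the range of $x$ into arithmetic blocks of length $q$ and exploit the fact that, inside such a block, $\alpha x$ modulo one is a small perturbation of an equally spaced set. Since $(a,q)=1$ and $|\alpha-a/q|<q^{-2}$, write $\alpha=a/q+\theta q^{-2}$ with $|\theta|<1$. Partition $\{1,\dots,\lfloor X\rfloor\}$ into $\lceil X/q\rceil$ consecutive blocks, each of length at most $q$. On a full block $\{mq+1,\dots,mq+q\}$ put $x=mq+t$ with $1\le t\le q$; then $\alpha x-\beta\equiv\gamma_m+atq^{-1}+\theta tq^{-2}\pmod 1$, where $\gamma_m$ does not depend on $t$. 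As $t$ runs through a complete residue system mod $q$, the residues $at\bmod q$ do likewise, so $\|atq^{-1}\|$ takes each of the values $q^{-1},2q^{-1},\dots$ at most twice (with $\tfrac12$ occurring once if $q$ is even), while $|\theta tq^{-2}|<q^{-1}$. Consequently the $q$ numbers $\alpha x-\beta\bmod 1$ lie within $q^{-1}$ of $q$ equally spaced points, whence for every integer $k\ge1$ the number of $x$ in the block with $\|\alpha x-\beta\|\le kq^{-1}$ is at most $2k+O(1)$, and the number with $\|\alpha x-\beta\|$ in the shell $(kq^{-1},(k+1)q^{-1}]$ is $O(1)$.

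For the first inequality I would bound the contribution of a single full block: the $O(1)$ terms with $\|\alpha x-\beta\|\le q^{-1}$ contribute $O(Y)$, and the terms whose distance lies in the $k$-th shell, $1\le k\le q/2$, contribute at most a bounded multiple of $\tfrac12qk^{-1}$, so the whole block contributes $O(Y+q\log q)$. Summing over the at most $\lceil X/q\rceil$ blocks, and handling the possibly shorter final block by the same estimate, gives a total of the shape $c_1XYq^{-1}+c_1Y+c_2X\log q+c_2q\log q$; it remains to choose the multiplicities and the harmonic-sum bound sharply enough to force $(c_1,c_2)=(4,1)$, yielding $4XYq^{-1}+4Y+(X+q)\log q$. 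For the second inequality the same partition is used, but on the block with index $m\ge1$ the weight $XY/x$ is at most $XY/(mq)$, so that block plays the role of the first sum with $Y$ replaced by $XY/(mq)$ and contributes $O\bigl(XY(mq)^{-1}+q\log q\bigr)$; summing the harmonic sum $\sum_{m\ge1}(mq)^{-1}\ll q^{-1}\log(2XYq)$ and $\sum_m1\le\lceil X/q\rceil$, and treating the initial block $\{1,\dots,q\}$ separately — here one uses $\|\alpha x\|\ge(q-x)q^{-2}$ for $1\le x<q$ together with the equidistribution above, so no term of that block exceeds $O(q)$ and the block contributes $O(q\log q)$ — one is led, after absorbing every logarithm into $\log(2XYq)$, to a bound of the form $(c_3XYq^{-1}+c_4X+c_5q)\log(2XYq)$ with the stated constants $c_3=10$, $c_4=1$, $c_5=\tfrac72$.

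The argument is conceptually routine; the only real obstacle is quantitative. One must pin down the exact multiplicities in the counting step — precisely how many $x$ in a block of length $q$ can fall within $kq^{-1}$ of a fixed point once the $\theta tq^{-2}$ perturbation is present — control the harmonic sums $\sum_{k\le q/2}k^{-1}$ and $\sum_{m\le X/q}m^{-1}$ with explicit constants, and dispose of the two edge cases (the truncated last block and the first block, where $XY/x$ is unbounded) without slack, so as to land exactly on the constants in the two displayed inequalities rather than merely reproving the $\ll$-statements of \cite{vaughan}. Concretely, I would transcribe Vaughan's proof step by step, replacing each occurrence of $\ll$ by an explicit inequality and then numerically optimising the resulting constants.
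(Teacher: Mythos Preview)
Your plan is sound and would work, but it is considerably more laborious than what the paper actually does. The paper's proof is two sentences long: it observes that Vinogradov's \emph{Method of Trigonometrical Sums} already contains, as Lemmas~8a and~8b, the case $X\le q$ of both inequalities \emph{with explicit constants}; for larger $X$ one chops the range into $\lfloor X/q\rfloor+1$ blocks of length at most $q$ and applies those lemmas blockwise. That is the whole argument. You propose instead to rebuild the block analysis from scratch---the equidistribution of $\alpha x-\beta$ inside a block, the shell counting, the harmonic sums, the edge blocks---and then to ``transcribe Vaughan's proof step by step, replacing each occurrence of $\ll$ by an explicit inequality''. This is essentially re-proving Vinogradov's Lemmas~8a and~8b rather than citing them. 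The gain is self-containment; the cost is a page or two of careful bookkeeping to hit the stated constants $(4,4,1,1)$ and $(10,1,\tfrac72)$, whereas the paper gets them for free from the reference. One small caveat in your sketch: your treatment of the initial block for the second sum leans on $\lVert\alpha x\rVert\ge(q-x)q^{-2}$, which degenerates as $x\to q$; in practice one bounds the single term with the smallest $\lVert\alpha x\rVert$ by the other branch of the minimum, $XY/x\le XY$, and handles the remaining $q-1$ terms via equidistribution---make sure that term is absorbed correctly when you chase constants.
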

\begin{proof}
The case when $X\leq q$ is treated in \cite[Lemma 8a and 8b]{vinogradov2004method}.
Splitting the sum in~\eqref{eq:Vaughan:(7):Expl} into $\lfloor X/q\rfloor+1$
sums of length at most $q$ and applying \cite[Lemma 8a]{vinogradov2004method}
yields the first inequality. The second inequality can be obtained
by splitting the sum in~\eqref{eq:Vaughan:(8):Expl} in essentially
the same way as in the proof of \cite[Lemma 8b]{vinogradov2004method}.
\end{proof}
\begin{Lemma}[\cite{vaughan}]
\label{lem:Vaughan:2:Expl}Let $a_{1},\ldots,a_{X},b_{1},\ldots,b_{Y}$
be complex numbers and write $l=\log(2XYq)$. Then
\begin{align}
\sum_{x\leq X}\max_{Z\leq XY/x}\biggl|\sum_{y\leq Z}a_{x}e(\alpha xy)\biggr| & \leq l\left(10XYq^{-1}+X+\frac{7}{2}q\right)\max_{x\leq X}|a_{x}|.\label{eq:lem:Vaughan:3:Expl}\\
\sum_{x\leq X}\max_{Z\leq Y}\biggl|\sum_{y\leq Z}a_{x}b_{y}e(\alpha xy)\biggr| & \leq l^{\frac{3}{2}}\biggl(\sum_{x\leq X}|a_{x}|^{2}\sum_{y\leq Y}|b_{y}|^{2}\biggr)^{\frac{1}{2}}\label{eq:Vaughan:S:Estim}\\
 & \quad\cdot(167XYq^{-1}+70X+6Y+10q)^{\frac{1}{2}},\nonumber 
\end{align}
\end{Lemma}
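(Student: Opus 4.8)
The plan is to prove the two inequalities separately. Inequality~\eqref{eq:lem:Vaughan:3:Expl} is essentially immediate from Lemma~\ref{lem:Vaughan:1:Expl}: pulling $\max_{x\le X}|a_x|$ out of the outer sum, one observes that for each fixed $x$ the remaining inner sum $\sum_{y\le Z}e(\alpha xy)$ is a geometric progression, so that $\bigl|\sum_{y\le Z}e(\alpha xy)\bigr|\le\min\{Z,(2\|\alpha x\|)^{-1}\}\le\min\{XY/x,(2\|\alpha x\|)^{-1}\}$ uniformly for $Z\le XY/x$; summing over $x\le X$ and invoking~\eqref{eq:Vaughan:(8):Expl} reproduces exactly the claimed bound, with $l=\log(2XYq)$.

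The substance lies in~\eqref{eq:Vaughan:S:Estim}, for which I would run the standard Cauchy--Schwarz argument (going back to Vinogradov and used by Vaughan), preceded by a completion step that disposes of the inner maximum (which is necessary: a direct Cauchy--Schwarz keeping the $x$-dependent cut-offs would leave incomplete sums over the non-interval set $\{x\le X:Z_x\ge w\}$, where the crucial savings from $1/\|\alpha h\|$ are unavailable). First, fix for each $x\le X$ a unimodular $\phi_x$ and an integer $Z_x\in[1,Y]$ with $\max_{Z\le Y}\bigl|\sum_{y\le Z}a_xb_ye(\alpha xy)\bigr|=\phi_x\sum_{y\le Z_x}a_xb_ye(\alpha xy)$, extend $b$ by $b_y=0$ for $y>Y$, and pick an integer $Q$ with $Y\le Q\le 2Y$ (possible since $Y\ge1$). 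The orthogonality relation $Q^{-1}\sum_{r\bmod Q}e(r(y-z)/Q)=\mathbf{1}_{y=z}$ for $1\le y,z\le Q$ gives $\mathbf{1}_{y\le Z_x}=Q^{-1}\sum_{|r|\le Q/2}\bigl(\sum_{z\le Z_x}e(-rz/Q)\bigr)e(ry/Q)$, so that, writing $S$ for the left-hand side of~\eqref{eq:Vaughan:S:Estim},
\[
S\le\frac1Q\sum_{|r|\le Q/2}w_r\sum_{x\le X}|a_x|\,\Bigl|\sum_{y\le Q}b_ye\bigl((\alpha x+r/Q)y\bigr)\Bigr|,\qquad w_r:=\min\{Q,(2\|r/Q\|)^{-1}\},
\]
and the familiar estimate for $\sum_{1\le r\le Q/2}r^{-1}$ gives $\sum_{|r|\le Q/2}w_r\le Q\log(e^2Q)$.

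Next, apply Cauchy--Schwarz in $x$ to the inner double sum. Opening the square and passing to absolute values, the unimodular phase $r/Q$ drops out, leaving $\sum_{y_1,y_2\le Q}|b_{y_1}||b_{y_2}|\min\{X,(2\|\alpha(y_1-y_2)\|)^{-1}\}$; grouping by $k=y_1-y_2$ and using $\sum_{y_1-y_2=k}|b_{y_1}||b_{y_2}|\le\sum_y|b_y|^2$ bounds this by $\bigl(\sum_y|b_y|^2\bigr)\sum_{|k|<Q}\min\{X,(2\|\alpha k\|)^{-1}\}$. Since $\sum_{|k|<Q}\min\{X,(2\|\alpha k\|)^{-1}\}\le X+2\sum_{1\le k\le Q}\min\{X,(2\|\alpha k\|)^{-1}\}$, inequality~\eqref{eq:Vaughan:(7):Expl}, applied with the parameters $X$, $Y$, $\beta$ there played by $Q$, $X$, $0$, bounds it by a constant multiple of $(XYq^{-1}+X+Y+q)\log q$ (with explicit constants, after using $Q\le 2Y$). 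Since this bound does not depend on $r$, combining it with the bound for $\sum_{|r|\le Q/2}w_r$ produces
\[
S\ll(\log Q)(\log q)^{1/2}\Bigl(\sum_{x\le X}|a_x|^2\Bigr)^{1/2}\Bigl(\sum_{y\le Y}|b_y|^2\Bigr)^{1/2}\bigl(XYq^{-1}+X+Y+q\bigr)^{1/2};
\]
using $Q\le 2Y$ and $(\log Q)(\log q)^{1/2}\le l^{3/2}$ and collecting the numerical constants then yields~\eqref{eq:Vaughan:S:Estim}.

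The strategy is standard; the genuine obstacle is the explicit bookkeeping. One has to choose the completion modulus $Q$ and the cut-off for $r$, and apply~\eqref{eq:Vaughan:(7):Expl}, in such a way that every auxiliary logarithm is really dominated by a power of $l=\log(2XYq)$ and the accumulated absolute constants stay below $167$, $70$, $6$ and $10$. Some extra care is needed when $q$ is small (so that $\log q$ is tiny or zero, and~\eqref{eq:Vaughan:(7):Expl} loses its $\log q$) and when $X$ or $Y$ is close to $1$; in those narrow ranges the asserted inequality has to be verified directly or absorbed into the generous constants. Everything else is routine computation.
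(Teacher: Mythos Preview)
Your proposal is correct and follows essentially the same route as the paper, which simply defers to Vaughan's original argument and notes that the single $\ll$ there hides a factor $2$ together with a factor $\tfrac12$ inside the $\min(X,\Vert\alpha h\Vert^{-1})$, and that Vaughan's (6) is to be replaced by~\eqref{eq:Vaughan:(7):Expl}. Your completion-plus-Cauchy--Schwarz sketch is exactly that argument spelled out, and you have correctly identified that the only nontrivial work left is the explicit bookkeeping of constants (including the borderline cases with small $q$, $X$, or $Y$).
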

\begin{proof}
The inequality~\eqref{eq:lem:Vaughan:3:Expl} follows directly from
\eqref{eq:Vaughan:(8):Expl}.

For the proof of~\eqref{eq:Vaughan:S:Estim} we refer the reader to
\cite{vaughan}. There is only one Vinogradov symbol
in the proof and it is hiding only a factor $2$ and a factor $\frac{1}{2}$
in the second argument of $\min(X,\left\Vert \alpha h\right\Vert ^{-1})$.
The application of (6) is to be replaced by an application of~\eqref{eq:Vaughan:(7):Expl}.
\end{proof}
\begin{Lemma}
\label{lem:Vaughan:DyadicSplitting}Suppose that $0\leq\delta<\frac{1}{2}$,
$\mathscr{L}\coloneqq\log(Nq/\delta)$ and
\begin{equation}
J\leq J'\leq H\leq q\leq N,\qquad J'<2J.\label{eq:Vaughan:1:Assumptions}
\end{equation}
Then
\begin{align*}
\sum_{J\leq j<J'}\Biggl|\sum_{n\leq N}\Lambda(n)e(\alpha jn)\Biggr| & \leq10^{3}\mathscr{L}^{7}\biggl(560JNq^{-\frac{1}{2}}+41JN^{\frac{3}{4}}+86(JNq)^{\frac{1}{2}}\\
 & \quad+\left(21+10^{-7}M_{\frac{5}{4}\epsilon}\right)J^{\frac{3}{5}+\frac{3}{4}\epsilon}N^{\frac{4}{5}+\epsilon}\biggr).
\end{align*}
\end{Lemma}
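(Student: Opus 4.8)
The plan is to follow Vaughan's treatment \cite{vaughan} of such sums, replacing each asymptotic estimate there by one of the effective inequalities assembled above. One applies Vaughan's identity to $\Lambda(n)$ with two parameters $U,V\ge1$, chosen at the end as suitable (possibly $q$-dependent) powers of $N$; this expresses $\sum_{n\le N}\Lambda(n)e(\alpha jn)$ as a bounded number of \emph{Type~I} sums $\sum_{\ell\le D}\gamma_\ell\sum_{m\le N/\ell}e(\alpha j\ell m)$, with $D\le UV$ and $|\gamma_\ell|\le\log N$, together with one \emph{Type~II} (bilinear) sum $\sum_{V<c}\sum_{U<d,\,cd\le N}\Lambda(c)\beta_d\,e(\alpha jcd)$ with $|\beta_d|\le d(d)$. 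It then suffices to bound the contribution of each kind to $\sum_{J\le j<J'}\bigl|\sum_{n\le N}\Lambda(n)e(\alpha jn)\bigr|$.

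For the Type~I sums I would strip off the logarithmic weight by partial summation (a factor $\mathscr{L}$), bound the inner sum over $m$ by $\min\{N/\ell,\,(2\|\alpha j\ell\|)^{-1}\}$, and merge $j$ with $\ell$ into one variable $h=j\ell$, each value of which occurs with multiplicity at most $d(h)$ and satisfies $N/\ell<2NJ/h$. This reduces the problem to estimating
\[
\sum_{h\le 2JD}d(h)\,\min\Bigl\{\tfrac{2NJ}{h},\tfrac{1}{2\|\alpha h\|}\Bigr\},
\]
which I would handle with~\eqref{eq:Vaughan:(8):Expl} and~\eqref{eq:lem:Vaughan:3:Expl} of Lemma~\ref{lem:Vaughan:2:Expl} together with the divisor bound~\eqref{eq:DivisorBound:General}; the exponent in the latter is taken to be $\tfrac54\epsilon$ precisely so that, after the final choice of $D$, what survives is a factor $N^{\epsilon}$, producing the term carrying $M_{\frac54\epsilon}$ in the stated bound. (For the $h$ in the range up to $q$ one splits into intervals of length $q$ and uses~\eqref{dirichlet}, with no loss; the divisor bound is only needed beyond that, where cancellation is no longer available.)

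For the Type~II sum I would decompose the ranges of $c$ and $d$ dyadically, $c\sim C$, $d\sim D$ with $CD\le N$---so that always $\min\{C,D\}\le N^{1/2}$---then combine $j$ with one of the two variables so as to produce a genuine bilinear form, and apply~\eqref{eq:Vaughan:S:Estim}. The two $\ell^{2}$-norms appearing there are controlled by Lemma~\ref{lem:DivisorBounds}: by the bound for $\sum_{x\le X}d(x)^2$ or by the bound for $\sum_{x\le X}d_3(x)^2$, according to whether $j$ was combined with the $d$- or with the $c$-variable (merging $j$ with a coefficient of divisor size $d(\cdot)$ produces one of size $d_3(\cdot)$); the sums $\sum\Lambda(x)^2$ are estimated by Chebyshev's bound (Lemma~\ref{lem:RosserSchoenfeld:ChebyshevFnc}). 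Using $CD\le N$, the four terms $167XYq^{-1}+70X+6Y+10q$ of~\eqref{eq:Vaughan:S:Estim} yield, after summing over the $O(\mathscr{L}^{2})$ boxes and over the boundedly many pieces of Vaughan's identity, contributions whose total is of the asserted shape, all logarithmic losses being swallowed by the (deliberately generous) power $\mathscr{L}^{7}$.

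The main obstacle is not any individual estimate but rather organising the argument so that everything fits: one must choose $U,V$ and the slot assignment in~\eqref{eq:Vaughan:S:Estim}, together with the appropriate case distinctions (in particular on the size of $q$ relative to powers of $N$, and on which of $c,d$ is longer), so as to reach precisely the exponent $J^{3/5+\frac34\epsilon}N^{4/5+\epsilon}$ and not something larger, and then track all of the explicit numerical constants through the partial summations, Cauchy--Schwarz applications and dyadic sums. A further point requiring care is that for $q$ in an intermediate range neither the trivial bound $\sum_{J\le j<J'}\bigl|\sum_{n\le N}\Lambda(n)e(\alpha jn)\bigr|\le c_0JN$ nor the crude divisor estimate alone suffices, so the genuine cancellation in the exponential sums must be invoked with some attention to detail.
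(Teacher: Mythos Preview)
Your plan is essentially the paper's own argument: Vaughan's identity, combine $j$ with short variables in the Type~I pieces and apply~\eqref{eq:lem:Vaughan:3:Expl}/\eqref{eq:Vaughan:(8):Expl}, then dyadically dissect the bilinear piece and apply~\eqref{eq:Vaughan:S:Estim} with Lemma~\ref{lem:DivisorBounds}. Two organisational points are worth flagging, since they are where the exact exponents and constants come from.

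First, the paper does not leave the Vaughan parameters as ``suitable powers of $N$'' to be optimised later; it takes $U=V=u$ with the single $J$-\emph{dependent} choice
\[
u=\min\bigl\{N^{2/5}J^{-1/5},\,q,\,Nq^{-1}\bigr\}.
\]
The three branches of this minimum are what produce the three terms $JNq^{-1/2}$, $J^{3/5}N^{4/5}$, $(JNq)^{1/2}$ with the correct exponents; in particular the $J$-dependence (which your description omits) is exactly what forces $J^{3/5}N^{4/5}$ rather than something larger. Correspondingly, after applying~\eqref{eq:lem:Vaughan:3:Expl} to $S_{2,j}$ (with $x=jdn$) the paper bounds $\max_{x\le J'u^2}\#\{j\mid x:j\le J'\}$ by a \emph{three-way} minimum using the explicit bounds~\eqref{eq:DivisorBound:ParticularCases} and~\eqref{eq:DivisorBound:General}, and pairs each branch with the matching term of the exponential-sum bound. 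This is how $M_{\frac54\epsilon}$ ends up attached only to the $J^{3/5}N^{4/5}$ term; your proposed device of applying $d(h)\le M_{\frac54\epsilon}h^{5\epsilon/4}$ globally and then arguing that ``for $h\le q$ there is no loss'' would, as written, smear the $M_{\frac54\epsilon}$ factor across all terms.

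Second, for the bilinear sum $S_{4,j}$ the paper performs only a \emph{single} dyadic split, in $m\sim M$, and decides for each $M$ whether to set $x=jm$ or $x=jn$ according to whether $M<N^{1/2}$; the condition $u<n\le N/m$ is then absorbed into the $\max_Z$ of~\eqref{eq:Vaughan:S:Estim}. Your double dyadic split in $(C,D)$ would work too, but costs an extra factor of $\mathscr{L}$ in the number of boxes, which the paper's tighter organisation avoids (hence the final power $\mathscr{L}^7$ rather than $\mathscr{L}^8$ at this stage).
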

\begin{proof}
We let 
\begin{equation}
u\coloneqq\min\left\{ N^{\frac{2}{5}}J^{-\frac{1}{5}},q,Nq^{-1}\right\} \label{eq:Vaughan:(13):uChoice}
\end{equation}
and use Vaughan's identity,
\[
\sum_{n\leq N}\Lambda(n)e(\alpha jn)=S_{1,j}-S_{2,j}+S_{3,j}-S_{4,j},
\]
where
\begin{align*}
S_{1,j} & =\sum_{n\leq u}\Lambda(n)e(\alpha jn),\\
S_{2,j} & =\underset{d,n\leq u}{\sum\sum}\sum_{r\leq N/dn}\mu(d)\Lambda(n)e(\alpha jdrn),\\
S_{3,j} & =\sum_{d\leq u}\sum_{n\leq N/d}\sum_{r\leq N/dn}\mu(d)\Lambda(n)e(\alpha jdrn),\\
S_{4,j} & =\sum_{u<m<N/u}\sum_{u<n\leq N/m}\sum_{\substack{d\mid m\\
d\leq u
}
}\mu(d)\Lambda(n)e(\alpha jmn),
\end{align*}
so that it remains to bound the contribution from $\sum_{J\leq j<J'}|S_{k,j}|$
for $k=1,2,3,4$. For $k=1$ this contribution is $\leq c_{0}N^{\frac{1}{2}}$.

To treat the case $k=2$ we combine the variables $jdn=x$ and employ
\eqref{eq:lem:Vaughan:3:Expl}, getting
\[
\sum_{J\leq j<J'}|S_{2,j}|\leq\mathscr{L}^{2}\left(8J'Nq^{-1}+\frac{4}{5}J'u^{2}+\frac{14}{5}q\right)\max_{x\leq J'u^{2}}\sum_{\substack{j\mid x\\
j\leq J'
}
}1.
\]
In view of~\eqref{eq:Vaughan:1:Assumptions},~\eqref{eq:Vaughan:(13):uChoice}
and~\eqref{eq:DivisorBound:ParticularCases},
\[
\max_{x\leq J'u^{2}}\sum_{\substack{j\mid x\\
j\leq J'
}
}1\leq\min\left\{ 139q^{\frac{1}{2}},\:3M_{\frac{5}{4}\epsilon}(J^{\frac{3}{4}}N)^{\epsilon},\:11(JNq^{-1})^{\frac{1}{2}}\right\} ,
\]
so that
\[
\sum_{J\leq j<J'}|S_{2,j}|\leq\mathscr{L}^{2}\left(2224JNq^{-\frac{1}{2}}+3M_{\frac{5}{4}\epsilon}J^{\frac{3}{5}+\frac{3}{4}\epsilon}N^{\frac{4}{5}+\epsilon}+31(JNq)^{\frac{1}{2}}\right).
\]

The case $k=3$ is handled by combining $jd=x$ and $rn=y$ to obtain
\[
\sum_{J\leq j<J'}|S_{3,j}|\leq\sum_{x<J'u}\biggl(\sum_{\substack{j\mid x\\
j\leq J'
}
}1\biggr)\max_{Z\leq J'N/x}\biggl|\sum_{y\leq Z}(\log y)e(\alpha xrn)\biggr|
\]
and partial summation to remove the $\log$-factor together with~\eqref{eq:Vaughan:(8):Expl}
and~\eqref{eq:Vaughan:1:Assumptions} yield
\begin{align*}
\sum_{J\leq j<J'}|S_{3,j}| & \leq\mathscr{L}^{2}\left(20J'Nq^{-1}+2J'u+7q\right)\max_{x\leq J'u}\sum_{\substack{j\mid x\\
j\leq J'
}
}1\\
 & \leq\mathscr{L}^{2}\left(360JNq^{-\frac{1}{2}}+44JN^{\frac{1}{2}}+21(JNq)^{\frac{1}{2}}\right),
\end{align*}
where for the last inequality we have used that
\[
\max_{x\leq J'u}\sum_{\substack{j\mid x\\
j\leq J'
}
}1\leq\min\left\{ 9q^{\frac{1}{2}},\:11(J^{2}N)^{\frac{1}{10}},\:3(JNq^{-1})^{\frac{1}{2}}\right\} .
\]

For the previous two cases we did combine $j$ with small variables
($\leq u$), which does not immediately work in case $k=4$. If $m\ll N^{\frac{1}{2}}$
we combine $j$ and $m$ for otherwise we have $n\ll N^{\frac{1}{2}}$
and combine $j$ with $n$ instead. In either case we run into the
problem that the range of $n$ is dependent on $m$. This can be resolved
by restricting the range of $m$ to short intervals.

Choose complex coefficients $\tilde{c}_{j}$ of modulus 1 such that
$\tilde{c}_{j}S_{4,j}\in{\R}_{\geq0}$. Then
\begin{align*}
\sum_{J\leq j\leq J'}|S_{4,j}| & =\sum_{J\leq j\leq J'}\sum_{u<m<N/u}\sum_{u<n\leq N/m}\tilde{c}_{j}\biggl(\sum_{\substack{d\mid m\\
d\leq u
}
}\mu(d)\biggr)\Lambda(n)e(\alpha jmn)\\
 & =\sideset{}{^{\diamondsuit}}\sum_{u\leq M<N/u}S_{M},
\end{align*}
where $\sideset{}{_{M}^{\diamondsuit}}{\textstyle \sum}$ means that
$M$ only takes values of the form $2^{k}u$ ($k\in{\N}_{0}$)
and
\[
S_{M}\coloneqq\sum_{J\leq j\leq J'}\sum_{\substack{m\sim M\\
m<N/u
}
}\sum_{u<n\leq N/m}\tilde{c}_{j}\biggl(\sum_{\substack{d\mid m\\
d\leq u
}
}\mu(d)\biggr)\Lambda(n)e(\alpha jmn),
\]
where $m\sim M$ is to be understood as $M<m\le2M$. When $u\leq M<N^{\frac{1}{2}}$,
by~\eqref{eq:Vaughan:S:Estim} and Lemma~\ref{lem:DivisorBounds}
\begin{align*}
|S_{M}| & \leq\sum_{x\sim J'M}\max_{Z\leq N/M}\biggl|\sum_{u<n\leq Z}d_{3}(x)\Lambda(n)e(\alpha xn)\biggr|\\
 & \leq128c_{0}^{\frac{1}{2}}\mathscr{L}^{6}\Bigl(2844JNq^{-\frac{1}{2}}+1841JM^{\frac{1}{2}}N^{\frac{1}{2}}\\
 & \quad+270J^{\frac{1}{2}}M^{-\frac{1}{2}}N+348(JNq)^{\frac{1}{2}}\Bigr)
\end{align*}
so that by summing over $M$ we get
\begin{align*}
\sideset{}{^{\diamondsuit}}\sum_{u\leq M<N^{1/2}}|S_{M}| & \leq\mathscr{L}^{7}\biggl(535414JNq^{-\frac{1}{2}}+\frac{480360}{\log(4N)}JN^{\frac{3}{4}}\\
 & \quad+\frac{240532}{\log(4N)}J^{\frac{1}{2}}Nu^{-\frac{1}{2}}+65753(JNq)^{\frac{1}{2}}\biggr)\\
 & \leq\mathscr{L}^{7}\biggl(\left(535414+\frac{240532}{\log(4N)}\right)JNq^{-\frac{1}{2}}+\frac{480360}{\log(4N)}JN^{\frac{3}{4}}\\
 & \quad+\frac{240532}{\log(4N)}J^{\frac{3}{5}}N^{\frac{4}{5}}+\left(65753+\frac{240532}{\log(4N)}\right)(JNq)^{\frac{1}{2}}\biggr)
\end{align*}

When $N^{\frac{1}{2}}\leq M<N/u$,
\begin{align*}
|S_{M}| & \leq\sum_{m\sim M}\max_{Z\leq J'N/M}\biggl|\sum_{Ju<y\leq Z}d(m)\biggl(\sum_{\substack{jn=y\\
J\leq j\leq J'\\
u<n\leq N/m
}
}\tilde{c}_{j}\Lambda(n)\biggr)e(\alpha my)\biggr|\\
 & \leq11\mathscr{L}^{5}\left(147JNq^{-\frac{1}{2}}+67(JMN)^{\frac{1}{2}}+20JNM^{-\frac{1}{2}}+18(JNq)^{\frac{1}{2}}\right).
\end{align*}
Hence,
\begin{align*}
\sideset{}{^{\diamondsuit}}\sum_{N^{1/2}\leq M<N/u}|S_{M}| & \leq11\mathscr{L}^{5}\biggl(107JNq^{-\frac{1}{2}}+\frac{229}{\log(4N)}J^{\frac{1}{2}}Nu^{-\frac{1}{2}}\\
 & \quad+\frac{69}{\log(4N)}JN^{\frac{3}{4}}+13(JNq)^{\frac{1}{2}}\biggr)\\
 & \leq\mathscr{L}^{5}\biggl(\left(1177+\frac{2519}{\log(4N)}\right)JNq^{-\frac{1}{2}}+\frac{2519}{\log(4N)}J^{\frac{3}{5}}N^{\frac{4}{5}}\\
 & \quad+\frac{759}{\log(4N)}JN^{\frac{3}{4}}+\left(143+\frac{2519}{\log(4N)}\right)(JNq)^{\frac{1}{2}}\biggr).
\end{align*}

The trivial bound
\[
\sum_{J\leq j<J'}\Biggl|\sum_{n\leq N}\Lambda(n)e(\alpha jn)\Biggr|\leq c_{0}N^{\frac{1}{4}}\cdot JN^{\frac{3}{4}}
\]
implies the theorem for $N\leq40000$, say. When $N>40000$ we deduce
the theorem from
\[
\sum_{J\leq j<J'}\biggl|\sum_{n\leq N}\Lambda(n)e(\alpha jn)\biggr|\leq\sum_{J\leq j<J'}\sum_{k\leq3}|S_{1,k}|+\biggl\{\:\,\sideset{}{^{\diamondsuit}}\sum_{u\leq M<N^{1/2}}+\sideset{}{^{\diamondsuit}}\sum_{N^{1/2}\leq M<N/u}\biggr\} S_{M}
\]
and all the bounds from above.
\end{proof}

\subsection{Proof of Theorem~\ref{thm:Vaughan:2:Expl}}

\label{subsec:Vaughan:2:Proof}When $N\leq28\cdot10^{22}$ we obtain
\eqref{eq:Vaughan:2:Estimate} by the trivial estimate 
\[
\biggl|\sum_{n\leq N}\Lambda(n)(\chi_{\delta}(\alpha n-\beta)-2\delta)\biggr|\leq3c_{0}N^{\frac{1}{4}}\cdot N^{\frac{3}{4}}.
\]
Subsequently we shall assume that $N>28\cdot10^{22}$. We apply Lemma
\ref{lem:Harman:2.1:Fourier} with $L=\lfloor R\delta^{-1}\rfloor$,
$R$ to be specified later, and obtain
\[
\sum_{n\leq N}\Lambda(n)(\chi_{\delta}(\alpha n-\beta)-2\delta)\lesseqgtr\pm c_{0}\frac{\delta}{R}N+\sum_{0<|\ell|\leq L}c_{\ell}^{\pm}e(-\ell\beta)\sum_{n\leq N}\Lambda(n)e(\ell\alpha n).
\]
By Abel's method of partial summation the sum on the right hand side
is found to be
\begin{equation}
\leq2\sum_{\ell\leq L}\min\left\{ 2\delta+\frac{1}{L+1},\frac{3}{2\ell}\right\} \biggl|\sum_{n\leq N}\Lambda(n)e(\ell\alpha n)\biggr|=3\frac{S(L)}{L}+3\int_{\varpi}^{L}\frac{S(u)}{u^{2}}\,\mathrm{d}u,\label{eq:PartialSummation}
\end{equation}
where
\[
S(u)\coloneqq\sum_{h\leq u}\biggl|\sum_{n\leq N}\Lambda(n)e(\alpha hn)\biggr|\quad\text{and}\quad\varpi\coloneqq\frac{3}{4\delta+\frac{2}{L+1}}\geq\frac{3}{4\delta}.
\]
Suppose for the moment that we already had a suitable bound for $S(H)$,
$\frac{3}{4\delta}\leq H\leq L$, e.g.,
\begin{align}
S(H) & \leq10^{3}\mathscr{L}^{7}\biggl(1120HNq^{-\frac{1}{2}}+82HN^{\frac{3}{4}}+294(HNq)^{\frac{1}{2}}\label{eq:S(H):inequality}\\
 & \quad+\left(62+10^{-6}M_{\frac{5}{4}\epsilon}\right)H^{\frac{3}{5}+\frac{3}{4}\epsilon}N^{\frac{4}{5}+\epsilon}\biggr).\nonumber 
\end{align}
Employing~\eqref{eq:PartialSummation}, taking $R=Nq$, and using
the estimate above gives~\eqref{eq:Vaughan:2:Estimate} as well.

In order to establish~\eqref{eq:S(H):inequality} we note that~\eqref{eq:Vaughan:S:Estim}
together with Lemma~\ref{lem:Vaughan:2:Expl} implies that
\begin{align*}
S(H) & \leq\sum_{h\leq H}\max_{Z\leq N}\Biggl|\sum_{n\leq Z}\Lambda(n)e(\alpha hn)\Biggr|\\
 & \leq(\log(2HNq))^{\frac{3}{2}}(\log N)^{\frac{1}{2}}c_{0}^{\frac{1}{2}}\\
 & \quad\cdot\left(\sqrt{167}HNq^{-\frac{1}{2}}+\sqrt{70}HN^{\frac{1}{2}}+\sqrt{6}H^{\frac{1}{2}}N+\sqrt{10}(HNq)^{\frac{1}{2}}\right).
\end{align*}
It then suffices to show~\eqref{eq:S(H):inequality} under the assumption
\eqref{eq:Vaughan:1:Assumptions} of Lemma~\ref{lem:Vaughan:DyadicSplitting}.
Applying said Lemma to the right hand side of
\[
S(H)\leq\sum_{0\leq k<\log_{2}H}\sum_{\substack{2^{k}\leq j<2^{k+1}\\
j\leq H
}
}\biggl|\sum_{n\leq N}\Lambda(n)e(\alpha jn)\biggr|
\]
yields~\eqref{eq:S(H):inequality}.

% -------------------------------------------------------------------------
\section{Concluding Remarks}\label{concluding}

We begin with an historical remark. Beatty sequences of the form ${\mathcal B}(\alpha,{1\over 2})$ appear first in 1772 in astronomical studies of Johann Bernoulli III. \cite{bernoulli}. Elwin Bruno Christoffel \cite{christoffel} studied homogeneous Beatty seqquences ${\mathcal B}(\alpha,0)$ in 1874; those appear as well in the treatise \cite{rayleigh} by the physicist and Nobel laureate John William Strutt (Lord Rayleigh), published in 1894. The name, however, is with respect to Samuel Beatty who popularized the topic by a problem he posed in 1926 in the American Mathematical Monthly \cite{beatty1,beatty2}. The proposed problem was to show that ${\mathcal B}(\alpha,0)\cup {\mathcal B}(\alpha',0)=\N$ is a disjoint union for irrational $\alpha$ and $\alpha'$ related to one another by ${1\over \alpha}+{1\over \alpha'}=1$; this is now well-known as both, Beatty's theorem and Rayleigh's theorem. The generalization of this statement to generalized Beatty sequences has been established by Aviezri S. Fraenkel \cite{fraenkel}; already its formulation is much more complicated than in the homogeneous case $\beta=\beta'=0$.
\par

The case of Beatty sequences ${\mathcal B}(\alpha,\beta)$ with $\alpha<1$ is trivial. Since then $0\leq \lfloor (n+1)\alpha+\beta\rfloor-\lfloor n\alpha+\beta\rfloor<2$ consecutive elements differ by at most $1$, whence ${\mathcal B}(\alpha,\beta)=\N\cap [\lfloor \alpha+\beta\rfloor,+\infty)$. The case of integral $\alpha$ is trivial as well. However, we shall discuss briefly the case of rational non-integral $\alpha>1$. Given $\alpha={a\over q}$ with coprime $a$ and $q\geq 2$, it follows that ${\mathcal B}(\alpha,\beta)$ is a disjoint union of residue classes
\begin{equation}\label{decomposition}
{\mathcal B}({\textstyle{a\over q}},\beta)=\bigcup_{1\leq b\leq q} \left( \left\lfloor {ab\over q}+\beta\right\rfloor +a\N_0 \right).
\end{equation}
We observe that for some values of ${a\over q}$ and $\beta$ there is no prime residue class and even no prime number contained, e.g.,
%${\mathcal B}({44\over 3},1)=\{15,30,45\}+3\N_0$.
${\mathcal B}({15\over 2},3)=\{10,18\}+15\N_0$.
It seems to be an interesting question to characterize for which parameters $\alpha\in\Q$ and $\beta$ real there appears at least one prime residue class in ${\mathcal B}(\alpha,\beta)$. In the homogeneous case, however, there is always at least one prime residue class contained in ${\mathcal B}({a\over q},0)$ which results from the choice $b=a^{-1}\bmod\,q$ in~\eqref{decomposition}.
\bigskip

\small

\end{document}